\newtheorem{theorem}{Theorem}[section]
\newtheorem{lemma}[theorem]{Lemma}
\newtheorem{example}[theorem]{Example}
\newdefinition{rem}{Remark}
\numberwithin{equation}{section}
\def\vector{\mathrm{vec}}
\journal{
         }
\begin{document}
\begin{frontmatter}
\title{Solutions and improved perturbation analysis for the matrix equation $X-A^*X^{-p}A=Q\;(p>0)$}
\tnotetext[label1]{The work was supported in part by Natural Science Foundation of China (11201263) and  Natural Science Foundation of Shandong Province (ZR2012AQ004).}
\author{Jing Li}
\ead{xlijing@sdu.edu.cn}

\address{School of Mathematics and Statistics, Shandong University at Weihai, Weihai 264209, P.R. China}

\begin{abstract}
In this paper the nonlinear matrix equation $X-A^*X^{-p}A=Q$ with
$p>0$ is investigated. We consider two cases of this equation: the case $p>1$ and the case $0<p<1.$ In the case $p>1$, a new sufficient condition for the existence of a unique positive definite solution for the matrix equation is obtained. A perturbation estimate for the positive definite solution is derived. Explicit expressions of the condition number for the positive definite solution are given. In the case $0<p<1$, a new sharper perturbation bound for the unique positive definite solution is
evaluated.  A new backward error of an approximate solution to the unique positive definite solution is
obtained. The theoretical
results are illustrated by numerical examples.
\end{abstract}

\begin{keyword}
matrix equation \sep positive definite solution \sep
perturbation bound \sep backward error\sep condition number\\
\vspace{1mm}
{\it AMS classification:} 15A24; 65H05
\end{keyword}
\end{frontmatter}

\section{Introduction}
In this paper we consider the Hermitian positive definite solution of the nonlinear matrix equation
\begin{equation}\label{eq1}                                   
X-A^*X^{-p}A=Q,
\end{equation}
where $A$, $Q$ and $X$ are
$n\times n$ complex matrices, $Q$ is a positive definite matrix and $p>0$. This type of nonlinear matrix
equations arises in the analysis of ladder networks, the dynamic
programming, control theory, stochastic filtering, statistics and
many applications \cite{s1,s2,s3,s4,s5,s6,s7}.

In the last few years, Eq.(\ref{eq1}) was investigated in some special cases.
For the nonlinear matrix equations $X-A^*X^{-1}A=Q$ \cite{s13,s24,s25,s26,s44},
 $X-A^*X^{-2}A=Q$ \cite{s15,s17}, $X-A^*X^{-n}A=Q$ \cite{s42,s43} and $X^{s}-A^*X^{-t}A=Q$ \cite{s18},
there were many contributions
in the literature to the solvability, numerical
solutions and perturbation analysis. In addition,
the similar equations $X+A^*X^{-1}A=Q$ \cite{s8,s9,s10,s11,s12,s44,s25,s24,s23}, $X+A^*X^{-2}A=Q$ \cite{s14,s15,s16}, $X+A^*X^{-n}A=Q$ \cite{s45,s43}, $X^{s}+A^*X^{-t}A=Q$
 \cite{s19,s20,s21,s18,s38}, $X+A^*X^{-q}A=Q$ \cite{s28,s34,s41} and
$X\pm\sum\limits_{i=1}^{m}A_{i}^*X^{-1}A_{i}=Q$ \cite{s30,s29,s31} were studied by many scholars.

In
\cite{s28}, a sufficient condition for the equation
$X-A^*X^{-p}A=Q\;(0<p\leq 1)$ to have a unique positive
definite solution was provided.
When the coefficient matrix $A$ is nonsingular, several sufficient conditions for the equation
$X-A^*X^{-q}A=Q\;(q\geq 1)$ to
have a unique positive definite solution were given in \cite{s39}. When the coefficient matrix $A$ is an arbitrary
complex matrix, necessary conditions and sufficient conditions for the existence of positive definite solutions for the equation $X-A^*X^{-q}A=Q\;(q\geq 1)$ were derived in \cite{s40}. Li and Zhang in \cite{s27} proved that there always exists a unique positive definite solution to the equation $X-A^*X^{-p}A=Q\;(0<p<1)$. They also obtained a perturbation bound and a backward error of an approximate solution for the unique solution of the equation $X-A^*X^{-p}A=Q\;(0<p<1)$.

As a continuation of the previous results, the rest of the paper is organized as follows. Section 2 gives some preliminary lemmas that will be needed to develop this work.
In Section 3, a new sufficient condition for Eq.(\ref{eq1}) with $p>1$
existing a unique positive definite solution is derived. In Section 4, a perturbation bound for the positive definite solution to
 Eq.(\ref{eq1}) with $p>1$ is given. In Section 5, applying the integral representation of matrix function, we also discuss the explicit expressions of condition number for the positive definite solution to Eq.(\ref{eq1}) with $p>1$. Furthermore, in Section 6, a new sharper perturbation bound for the unique positive definite solution to Eq.(\ref{eq1}) with $0<p<1$ is evaluated. In Section 7, a new backward error of an approximate solution to Eq.(\ref{eq1}) with $0<p<1$ is
obtained. Finally, several numerical
  examples are presented in Section 8.

We denote by $\mathcal{C}^{n\times n}$ the set of $n\times n$
complex matrices, by $\mathcal{H}^{n\times n}$ the set of $n\times n$
Hermitian matrices, by $I$ the identity matrix, by $\|\cdot\|$ the spectral
norm, by $\|\cdot\|_{F}$ the Frobenius norm and by $\lambda_{\max}(M)$ and
$\lambda_{\min}(M)$ the maximal and minimal eigenvalues of $M$,
respectively. For $A=(a_{1},\dots, a_{n})=(a_{ij})\in
\mathcal{C}^{n\times n}$ and a matrix $B$, $A\otimes B=(a_{ij}B)$
is a Kronecker product, and $\vector A$
is a vector defined by $\vector A=(a_{1}^{T},\dots,
a_{n}^{T})^{T}$. For $X, Y\in\mathcal{H}^{n\times n}$, we write $X\geq Y$(resp. $X>Y)$ if
$X-Y$ is Hermitian positive semi-definite (resp. definite). Let
$\overline{\kappa}=\lambda_{\max}(A^*A)$, $\underline{\kappa}=\lambda_{\min}(A^*A)$.
\section{Priliminaries}
In this section, we will give some preliminary lemmas that will be needed to develop this work.

\begin{lemma}  \label{lem1}                                                             
\cite{s27}
For every positive definite matrix $X\in\mathcal{H}^{n\times n}$, if $0<p<1,$ then
\begin{enumerate}
\item[$(i)$] $X^{-p}=\displaystyle\frac{\sin
p\,\pi}{\pi}\!\!\int^\infty_0\!\!\!(\lambda\,I+X)^{-1}\lambda^{-p}\,\emph{d}\lambda.$
\item[$(ii)$] $X^{-p}= \displaystyle\frac{\sin
p\,\pi}{p\,\pi}\int^\infty_0\!\!\!(\lambda\,I+X)^{-1}X(\lambda\,I+X)^{-1}\lambda^{-p}\,\emph{d}\lambda.$
\end{enumerate}
\end{lemma}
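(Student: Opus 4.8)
The plan is to reduce both identities to their scalar analogues through the spectral representation of $X$, and then to establish the scalar formulas from the Euler Beta integral together with the reflection formula $\Gamma(p)\Gamma(1-p)=\pi/\sin p\pi$. Since $X$ is Hermitian positive definite, I would write $X=U\Lambda U^*$ with $U$ unitary and $\Lambda=\mathrm{diag}(\mu_1,\dots,\mu_n)$, $\mu_i>0$. The essential point is that $U$ does not depend on $\lambda$, so every matrix under the integral sign is diagonalized by the \emph{same} $U$: one has $X^{-p}=U\Lambda^{-p}U^*$, $(\lambda I+X)^{-1}=U(\lambda I+\Lambda)^{-1}U^*$, and likewise $(\lambda I+X)^{-1}X(\lambda I+X)^{-1}=U(\lambda I+\Lambda)^{-1}\Lambda(\lambda I+\Lambda)^{-1}U^*$. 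Consequently the matrix integrals are computed entrywise in the eigenbasis, and it suffices to prove each identity with $X$ replaced by a single eigenvalue $\mu>0$.

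For part $(i)$ I would prove the scalar identity
\[
\mu^{-p}=\frac{\sin p\pi}{\pi}\int_0^\infty\frac{\lambda^{-p}}{\lambda+\mu}\,d\lambda,\qquad 0<p<1.
\]
The substitution $\lambda=\mu t$ turns the right-hand integral into $\mu^{-p}\int_0^\infty t^{-p}(1+t)^{-1}\,dt$, and the latter is the Beta integral $B(1-p,p)=\Gamma(1-p)\Gamma(p)$, which by the reflection formula equals $\pi/\sin p\pi$; this cancels the prefactor and leaves $\mu^{-p}$. Along the way I would record absolute convergence for $0<p<1$: near $\lambda=0$ the integrand behaves like $\lambda^{-p}$ (integrable since $p<1$), and near $\lambda=\infty$ like $\lambda^{-1-p}$ (integrable since $p>0$), which legitimizes the interchange of integration with the finite-dimensional linear operations.

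Part $(ii)$ I would handle in the same spirit, reducing to
\[
\mu^{-p}=\frac{\sin p\pi}{p\pi}\int_0^\infty\frac{\mu\,\lambda^{-p}}{(\lambda+\mu)^2}\,d\lambda,
\]
where the substitution $\lambda=\mu t$ produces $\mu^{-p}\int_0^\infty t^{-p}(1+t)^{-2}\,dt=\mu^{-p}B(1-p,1+p)=\mu^{-p}\Gamma(1-p)\Gamma(1+p)$; using $\Gamma(1+p)=p\Gamma(p)$ and the reflection formula gives $\mu^{-p}\cdot p\pi/\sin p\pi$, which matches. A quicker alternative is to obtain $(ii)$ directly from $(i)$ by differentiating the scalar identity under the integral sign with respect to $\mu$, giving $p\mu^{-p-1}=\frac{\sin p\pi}{\pi}\int_0^\infty\lambda^{-p}(\lambda+\mu)^{-2}\,d\lambda$, and then multiplying by $\mu$; since $\mu(\lambda+\mu)^{-2}$ is precisely the scalar form of $(\lambda I+X)^{-1}X(\lambda I+X)^{-1}$, this recovers $(ii)$.

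I do not anticipate a serious obstacle. The only points requiring care are the justification that the matrix-valued integrals may be evaluated eigenvalue-by-eigenvalue—immediate because the unitary factor $U$ is independent of $\lambda$—and the convergence/differentiation-under-the-integral estimates needed to pass between the two forms. The genuine content is entirely classical, namely the Euler reflection formula for the Gamma function, so the heart of the argument is the one-line scalar Beta computation, and the matrix statement follows by functional calculus.
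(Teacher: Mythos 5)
Your proposal is correct: the reduction to the scalar case via the spectral decomposition $X=U\Lambda U^*$ is legitimate because $U$ is independent of $\lambda$, and the scalar identities follow exactly as you say from $\int_0^\infty t^{-p}(1+t)^{-1}\,dt=B(1-p,p)=\pi/\sin p\pi$ and $\int_0^\infty t^{-p}(1+t)^{-2}\,dt=B(1-p,1+p)=p\pi/\sin p\pi$. Note, however, that the paper offers no proof of this lemma at all --- it is quoted verbatim from reference \cite{s27} --- so there is nothing internal to compare against; your argument is the standard one (functional calculus plus the Euler Beta/reflection formula) and is exactly what one would expect the cited source to use.
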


\begin{lemma}  \label{lem2}                                                             
\cite{s27} There exists a unique positive definite solution $X$
of $X-A^*X^{-p}A=Q\;(0<p<1)$ and the iteration
\begin{equation}\label{eq10}
   X_{0}>0,\;\;X_{n}=Q+A^{*}X_{n-1}^{-p}A,\;\;n=1, 2, \cdots
\end{equation}
converges to $X$.
\end{lemma}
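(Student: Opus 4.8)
The plan is to realize the solution as the unique fixed point of the map $F(X)=Q+A^*X^{-p}A$ on the cone $P(n):=\{M\in\mathcal{H}^{n\times n}:M>0\}$, and to prove that $F$ is a strict contraction in a suitable complete metric, so that existence, uniqueness, and convergence of the iteration (\ref{eq10}) all follow simultaneously from the Banach fixed point theorem. First I would record that $F$ maps $P(n)$ into itself: for every $X>0$ one has $F(X)=Q+A^*X^{-p}A\ge Q>0$, so the iterates of (\ref{eq10}) are well defined and stay in $P(n)$, and any solution automatically satisfies $X\ge Q$.

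The metric I would use is the Thompson metric $d(X,Y)=\|\log(Y^{-1/2}XY^{-1/2})\|$, under which $P(n)$ is complete; concretely, $d(X,Y)\le s$ means $e^{-s}Y\le X\le e^{s}Y$. The heart of the argument is the contraction estimate $d(F(X),F(Y))\le p\,d(X,Y)$. Suppose $d(X,Y)=s$, i.e. $e^{-s}Y\le X\le e^{s}Y$. Lemma \ref{lem1}$(i)$ exhibits $X\mapsto X^{-p}$ as a positive integral (positive weight $\tfrac{\sin p\pi}{\pi}\lambda^{-p}$) of the resolvent maps $X\mapsto(\lambda I+X)^{-1}$, each of which is operator antitone; hence $X\mapsto X^{-p}$ is operator antitone. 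Applying this to $X\le e^{s}Y$ and pulling the scalar out through $(e^{s}Y)^{-p}=e^{-ps}Y^{-p}$ gives $X^{-p}\ge e^{-ps}Y^{-p}$, and symmetrically $X^{-p}\le e^{ps}Y^{-p}$. Conjugating by $A$ and adding $Q$ preserves the two-sided bound: with $\delta=ps\ge 0$, from $A^*X^{-p}A\le e^{\delta}A^*Y^{-p}A$ and $Q\le e^{\delta}Q$ I get $F(X)\le e^{\delta}F(Y)$, and likewise $F(X)\ge e^{-\delta}F(Y)$. Thus $d(F(X),F(Y))\le\delta=p\,d(X,Y)$, and since $0<p<1$ the map $F$ is a contraction. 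Banach's theorem then yields a unique fixed point $X\in P(n)$, which is exactly the unique positive definite solution of (\ref{eq1}), and guarantees $X_n=F(X_{n-1})\to X$ for every starting point $X_0>0$.

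The step I expect to be the main obstacle is this contraction estimate, and in particular its two ingredients: justifying operator antitonicity of $X\mapsto X^{-p}$ cleanly from Lemma \ref{lem1}$(i)$ (the positive weight together with the antitone integrand, with attention to convergence of the integral), and handling the congruence $A^*X^{-p}A$ when $A$ is singular, where this term is only positive semidefinite. The reason for working with the multiplicative Thompson metric rather than an additive norm is precisely that the congruence by $A$ is nonexpansive regardless of the size or rank of $A$. A naive estimate in the spectral or Frobenius norm using Lemma \ref{lem1}$(i)$, via $X_-^{-p}-X_+^{-p}=\tfrac{\sin p\pi}{\pi}\int_0^\infty(\lambda I+X_-)^{-1}(X_+-X_-)(\lambda I+X_+)^{-1}\lambda^{-p}\,\mathrm{d}\lambda$, produces a factor like $\overline{\kappa}\,p\,\lambda_{\min}(Q)^{-p-1}$ and would only prove uniqueness under a smallness assumption on $A$; avoiding that is the crux.

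If one prefers to stay within the order structure, an alternative route is monotone iteration on the invariant order interval $[Q,M]$ with $M=Q+A^*Q^{-p}A$: starting from $X_0=Q$, the antitonicity of $F$ makes the even-indexed iterates increase and the odd-indexed ones decrease, with every even iterate below every odd one, so both subsequences converge to limits $X_-\le X_+$ satisfying $F(X_-)=X_+$ and $F(X_+)=X_-$. The same representation of $X_-^{-p}-X_+^{-p}$ from Lemma \ref{lem1}$(i)$ is then used to force $X_-=X_+$, establishing existence; the contraction argument above remains the cleanest way to obtain uniqueness and convergence from an arbitrary $X_0>0$ without any restriction on $A$.
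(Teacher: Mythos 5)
This lemma is imported verbatim from \cite{s27}; the present paper gives no proof of it, so there is no in-house argument to compare against, and your proposal has to stand on its own. It does. The Thompson-metric contraction argument is correct and complete: the map $F(X)=Q+A^{*}X^{-p}A$ sends the open cone of positive definite matrices into itself (indeed into $[Q,\,Q+A^{*}Q^{-p}A]$ once $X\ge Q$), the cone is complete under $d(X,Y)=\|\log (Y^{-1/2}XY^{-1/2})\|$, the operator antitonicity of $X\mapsto X^{-p}$ for $0<p<1$ does follow from the resolvent integral in Lemma \ref{lem1}$(i)$ (positive weight $\frac{\sin p\pi}{\pi}\lambda^{-p}$, antitone integrand, convergent integral), and the homogeneity $(e^{s}Y)^{-p}=e^{-ps}Y^{-p}$ is exactly what converts the two-sided bound $e^{-s}Y\le X\le e^{s}Y$ into $e^{-ps}F(Y)\le F(X)\le e^{ps}F(Y)$, giving ratio $p<1$. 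You correctly identified and defused the two genuine danger points: a singular $A$ (the congruence alone leaves the interior of the cone, but adding $Q$ restores strict positivity while the order inequalities survive), and the failure of any naive norm estimate based on $X_{-}^{-p}-X_{+}^{-p}$, which would only yield uniqueness under a smallness condition on $\overline{\kappa}$ of the kind appearing in \cite{s28} and in Lemma \ref{lem7} -- precisely what this lemma avoids. The source \cite{s27} reaches the same conclusion through the order structure of the cone (an invariant-interval/concavity argument in the spirit of your second route); your metric formulation buys a cleaner simultaneous treatment of existence, uniqueness, and global convergence of (\ref{eq10}) from an arbitrary $X_{0}>0$, which the monotone-iteration alternative you sketch would only give for specially chosen starting points without an extra argument. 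The proposal is acceptable as written, provided the standard facts about the Thompson metric (completeness, symmetry, triangle inequality) are either cited or proved.
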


\begin{lemma}\label{lem8}                                        
\cite{s34}
\begin{enumerate}
 \item[$(i)$]If $X\in\mathcal{H}^{n\times n},$ then $\|e^{-X}\|=e^{-\lambda_{\min}(X)}.$
  \item [$(ii)$]If $X\in\mathcal{H}^{n\times n}$ and $r>0,$ then $X^{-r}=\frac{1}{\Gamma(r)}\int_{0}^{\infty}e^{-sX}s^{r-1}ds.$
  \item [$(ii)$]If $A, B\in\mathcal{C}^{n\times n},$ Then $e^{A+B}-e^{A}=\int_{0}^{1}e^{(1-t)A}Be^{t(A+B)}dt.$
\end{enumerate}
\end{lemma}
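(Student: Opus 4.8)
The plan is to treat each of the three parts separately, reducing the matrix identities in $(i)$ and $(ii)$ to scalar facts via the spectral theorem and establishing $(iii)$ by a Duhamel-type differentiation argument. For part $(i)$, since $X\in\mathcal{H}^{n\times n}$ I would write its spectral decomposition $X=U\Lambda U^*$ with $U$ unitary and $\Lambda=\mathrm{diag}(\lambda_1,\dots,\lambda_n)$ real. Then $e^{-X}=Ue^{-\Lambda}U^*$ is Hermitian with eigenvalues $e^{-\lambda_i}>0$, and because the spectral norm of a Hermitian matrix equals its largest eigenvalue in modulus, $\|e^{-X}\|=\max_i e^{-\lambda_i}=e^{-\min_i\lambda_i}=e^{-\lambda_{\min}(X)}$.

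For part $(ii)$ (which implicitly requires $X>0$ so that $X^{-r}$ is defined and the integral converges), I would again diagonalize $X=U\Lambda U^*$, so that $e^{-sX}=Ue^{-s\Lambda}U^*$ and hence $\frac{1}{\Gamma(r)}\int_0^\infty e^{-sX}s^{r-1}\,ds=U\bigl(\frac{1}{\Gamma(r)}\int_0^\infty e^{-s\Lambda}s^{r-1}\,ds\bigr)U^*$. The bracketed factor is diagonal with $i$th entry $\frac{1}{\Gamma(r)}\int_0^\infty e^{-s\lambda_i}s^{r-1}\,ds$, which equals $\lambda_i^{-r}$ after the substitution $u=s\lambda_i$ in the definition of $\Gamma(r)$. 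The integral therefore collapses to $U\Lambda^{-r}U^*=X^{-r}$.

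For part $(iii)$, I would introduce the auxiliary matrix-valued function $F(t)=e^{(1-t)A}e^{t(A+B)}$, observe that $F(0)=e^A$ and $F(1)=e^{A+B}$, and recover the identity by integrating $F'(t)$ over $[0,1]$. Differentiating gives $F'(t)=-Ae^{(1-t)A}e^{t(A+B)}+e^{(1-t)A}(A+B)e^{t(A+B)}$. The main obstacle is the non-commutativity of $A$ and $B$: the expression only simplifies because $A$ commutes with $e^{(1-t)A}$, which lets me replace $Ae^{(1-t)A}$ by $e^{(1-t)A}A$ and cancel the two $A$-terms, leaving $F'(t)=e^{(1-t)A}Be^{t(A+B)}$. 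Applying the fundamental theorem of calculus to $F$ then yields $e^{A+B}-e^A=\int_0^1 e^{(1-t)A}Be^{t(A+B)}\,dt$, completing the proof.
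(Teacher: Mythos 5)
Your proofs of all three parts are correct, and they are the standard arguments: the paper itself gives no proof of this lemma, quoting it directly from reference \cite{s34}, so there is nothing to compare against. Your spectral-decomposition reductions for $(i)$ and $(ii)$ (including the correct observation that $(ii)$ implicitly requires $X>0$ for $X^{-r}$ to make sense and for the integral to converge) and the Duhamel argument for $(iii)$ via $F(t)=e^{(1-t)A}e^{t(A+B)}$ are exactly how one would verify these facts.
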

\section{A sufficient condition for the existence of a unique solution of $X-A^{*}X^{-p}A=Q\;(p>1)$}
In this section, we derive a new sufficient condition for the existence of a unique solution of $X-A^{*}X^{-p}A=Q\;(p>1)$ beginning with the lemma.

\begin{lemma}\label{lem6}
\cite{s40}If
 \begin{equation}\label{eq11}
\beta>(p\overline{\kappa})^{\frac{1}{p+1}},
\end{equation}
then Eq.(\ref{eq1}) has a unique positive definite solution $X\in [\beta I,\;\alpha I],$ where $\alpha$ and $\beta$ are respectively positive solutions of the following equations
$$
(x-\lambda_{max}(Q))\left(\lambda_{min}(Q)+\frac{\underline{\kappa}}{x^p}\right)^p=\overline{\kappa}
$$
and
$$                                                           
(x-\lambda_{min}(Q))\left(\lambda_{max}(Q)+\frac{\overline{\kappa}}{x^p}\right)^p=\underline{\kappa}.$$

Furthermore, \begin{equation}\label{eq13}
\lambda_{min}(Q)\leq\beta\leq\alpha.\end{equation}
\end{lemma}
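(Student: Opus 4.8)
The plan is to recast \eqref{eq1} as the fixed-point problem $X=\mathcal{F}(X)$ with $\mathcal{F}(X)=Q+A^*X^{-p}A$, and to run the Banach contraction principle on the order interval $\Omega=\{X\in\mathcal{H}^{n\times n}:\beta I\le X\le\alpha I\}$, which is a closed, bounded, convex (hence complete) subset of the finite-dimensional space $\mathcal{H}^{n\times n}$ in the spectral norm. The decisive difference from the case $0<p<1$ handled by the monotone iteration of Lemma~\ref{lem2} is that for $p>1$ the map $X\mapsto X^{-p}$ is no longer operator monotone, so an order-convergence argument is unavailable; instead I would control $\mathcal{F}$ through purely scalar eigenvalue estimates for the self-map step and through the operator-Lipschitz bound furnished by Lemma~\ref{lem8} for the contraction step.

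First I would record the two scalar estimates coming from conjugation and the spectral calculus: for positive definite $X$ one has $\lambda_{\max}(A^*X^{-p}A)\le\lambda_{\min}(X)^{-p}\,\overline{\kappa}$ and $\lambda_{\min}(A^*X^{-p}A)\ge\lambda_{\max}(X)^{-p}\,\underline{\kappa}$, using $A^*X^{-p}A\le\lambda_{\max}(X^{-p})A^*A$ and $A^*X^{-p}A\ge\lambda_{\min}(X^{-p})A^*A$ together with $\lambda_{\max}(X^{-p})=\lambda_{\min}(X)^{-p}$. I would then rewrite the two defining equations in the equivalent coupled form $\alpha=\lambda_{\max}(Q)+\overline{\kappa}\,\beta^{-p}$ and $\beta=\lambda_{\min}(Q)+\underline{\kappa}\,\alpha^{-p}$ (each single equation is obtained from this pair by eliminating the other unknown). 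For every $X\in\Omega$ the estimates then give $\lambda_{\max}(\mathcal{F}(X))\le\lambda_{\max}(Q)+\overline{\kappa}\,\beta^{-p}=\alpha$ and $\lambda_{\min}(\mathcal{F}(X))\ge\lambda_{\min}(Q)+\underline{\kappa}\,\alpha^{-p}=\beta$, so $\mathcal{F}(\Omega)\subseteq\Omega$. The bounds \eqref{eq13} fall out here: $\beta=\lambda_{\min}(Q)+\underline{\kappa}\,\alpha^{-p}\ge\lambda_{\min}(Q)$ is immediate, and $\beta\le\alpha$ is the statement that the lower trapping level cannot exceed the upper one for the matched pair of roots.

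Next I would prove the contraction estimate, which is exactly where \eqref{eq11} enters. For $X,Y\in\Omega$, $\mathcal{F}(X)-\mathcal{F}(Y)=A^*(X^{-p}-Y^{-p})A$, so $\|\mathcal{F}(X)-\mathcal{F}(Y)\|\le\overline{\kappa}\,\|X^{-p}-Y^{-p}\|$. To bound $\|X^{-p}-Y^{-p}\|$ I would invoke Lemma~\ref{lem8}: writing $X^{-p}-Y^{-p}=\frac{1}{\Gamma(p)}\int_0^\infty(e^{-sX}-e^{-sY})s^{p-1}\,ds$, applying the identity $e^{A+B}-e^{A}=\int_0^1 e^{(1-t)A}Be^{t(A+B)}\,dt$ with $A=-sY$, $B=-s(X-Y)$, and using $\|e^{-cZ}\|=e^{-c\lambda_{\min}(Z)}$ with $\lambda_{\min}(X),\lambda_{\min}(Y)\ge\beta$, one gets $\|e^{-sX}-e^{-sY}\|\le s\,e^{-s\beta}\|X-Y\|$; integrating against $s^{p-1}$ and using $\int_0^\infty s^{p}e^{-s\beta}\,ds=\Gamma(p+1)/\beta^{p+1}$ yields the clean Lipschitz bound $\|X^{-p}-Y^{-p}\|\le\frac{p}{\beta^{p+1}}\|X-Y\|$. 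Hence $\|\mathcal{F}(X)-\mathcal{F}(Y)\|\le\frac{p\,\overline{\kappa}}{\beta^{p+1}}\|X-Y\|$, and the contraction factor $p\,\overline{\kappa}\,\beta^{-(p+1)}$ is strictly less than $1$ precisely when $\beta^{p+1}>p\,\overline{\kappa}$, i.e.\ under \eqref{eq11}. Banach's theorem then delivers a unique fixed point of $\mathcal{F}$ in $\Omega$, a unique positive definite solution lying in $[\beta I,\alpha I]$. To upgrade uniqueness in $\Omega$ to uniqueness among all positive definite solutions, I would show any such solution $X$ is trapped: its extreme eigenvalues satisfy $\lambda_{\max}(X)\le\lambda_{\max}(Q)+\overline{\kappa}\,\lambda_{\min}(X)^{-p}$ and $\lambda_{\min}(X)\ge\lambda_{\min}(Q)+\underline{\kappa}\,\lambda_{\max}(X)^{-p}$, which force $\lambda_{\min}(X)\ge\beta$ and $\lambda_{\max}(X)\le\alpha$.

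I expect the main obstacle to lie in this last, scalar, step rather than in the matrix analysis: inverting the eigenvalue inequalities to conclude $\lambda_{\min}(X)\ge\beta$, $\lambda_{\max}(X)\le\alpha$ requires the monotonicity of the scalar map $x\mapsto(x-\lambda_{\max}(Q))\bigl(\lambda_{\min}(Q)+\underline{\kappa}\,x^{-p}\bigr)^{p}$, which is a product of an increasing and a decreasing factor, so one must verify it is increasing on the relevant range and that $\alpha,\beta$ are the correct (matched, ordered) roots. The hypothesis \eqref{eq11} and the ordering \eqref{eq13} are precisely what keep this range well behaved; once the scalar bookkeeping is settled, the operator part of the argument is entirely driven by the Lipschitz estimate above, and no operator monotonicity of $X\mapsto X^{-p}$ is needed.
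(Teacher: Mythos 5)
The first thing to say is that the paper offers no proof of this lemma at all: it is imported verbatim from \cite{s40}, so there is no in-paper argument to compare yours against. Judged on its own terms, the matrix-analytic core of your proposal is sound and is in fact exactly the toolkit this paper deploys elsewhere: the invariance $\mathcal{F}(\Omega)\subseteq\Omega$ via eigenvalue bounds on $A^{*}X^{-p}A$, and the Lipschitz estimate $\|X^{-p}-Y^{-p}\|\le p\,\beta^{-(p+1)}\|X-Y\|$ obtained from Lemma \ref{lem8} is precisely the computation carried out in the proof of Theorem \ref{thm1} (with $\lambda_{\min}(Q)$ in place of $\beta$). The resulting contraction constant $p\,\overline{\kappa}\,\beta^{-(p+1)}<1$ matches hypothesis (\ref{eq11}) exactly, and Banach's theorem then yields a unique solution in $[\beta I,\alpha I]$ --- \emph{provided} that interval is nonempty and invariant.

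The genuine gap is the scalar bookkeeping, which you candidly flag but do not close, and which is not routine. (i) Your ``coupled form'' $\alpha=\lambda_{\max}(Q)+\overline{\kappa}\beta^{-p}$, $\beta=\lambda_{\min}(Q)+\underline{\kappa}\alpha^{-p}$ is equivalent to the two displayed one-variable equations only for a \emph{matched} pair of roots; those equations need not have unique positive solutions, so you must say which roots $\alpha,\beta$ are and prove they correspond. (ii) The ordering $\beta\le\alpha$ is asserted (``falls out here''), but your justification is circular --- you cannot appeal to (\ref{eq13}), which is part of the conclusion; an actual argument is needed (for instance via the monotonicity of $x\mapsto x+\underline{\kappa}x^{-p}$ for $x>(p\underline{\kappa})^{\frac{1}{p+1}}$, which is where (\ref{eq11}) would enter again), and without $\beta\le\alpha$ the set $\Omega$ may be empty. (iii) Upgrading uniqueness in $\Omega$ to uniqueness among \emph{all} positive definite solutions requires inverting the scalar inequalities $(\lambda_{\max}(X)-\lambda_{\max}(Q))(\lambda_{\min}(Q)+\underline{\kappa}\lambda_{\max}(X)^{-p})^{p}\le\overline{\kappa}$ and its companion, hence monotonicity (or a correct extremal-root selection) for the functions $x\mapsto(x-\lambda_{\max}(Q))(\lambda_{\min}(Q)+\underline{\kappa}x^{-p})^{p}$ and $x\mapsto(x-\lambda_{\min}(Q))(\lambda_{\max}(Q)+\overline{\kappa}x^{-p})^{p}$. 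That this is not free is visible in this very paper: the theorem of Section 3 must impose the extra hypothesis $\overline{\kappa}<\lambda_{\max}(Q)(\lambda_{\min}(Q)p)^{p}/(p-1)^{p+1}$ in (\ref{eq8}) precisely to make the second of these functions increasing on $(0,+\infty)$. Until (i)--(iii) are settled, your argument proves existence and uniqueness only within $[\beta I,\alpha I]$ for a suitably chosen pair, not the full statement of the lemma.
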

\begin{theorem}
If
\begin{equation}\label{eq8}
((p\overline{\kappa})^{\frac{1}{p+1}}-\lambda_{min}(Q))\left(\lambda_{max}(Q)+\frac{\overline{\kappa}}
{(p\overline{\kappa})^{\frac{p}{p+1}}}\right)^{p}<\underline{\kappa}\leq
\overline{\kappa}<\frac{\lambda_{max}(Q)\left(\lambda_{min}(Q)p\right)^{p}}{(p-1)^{p+1}},
\end{equation}
then Eq.(\ref{eq1}) has a unique positive definite solution.
\end{theorem}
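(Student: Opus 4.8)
The plan is to reduce the statement to a single scalar monotonicity argument and then invoke Lemma \ref{lem6}. First I would introduce the abbreviation $\gamma=(p\overline{\kappa})^{\frac{1}{p+1}}$ together with the scalar function
$$g(x)=(x-\lambda_{\min}(Q))\left(\lambda_{\max}(Q)+\frac{\overline{\kappa}}{x^{p}}\right)^{p},\qquad x>0,$$
so that the number $\beta$ appearing in Lemma \ref{lem6} is, by definition, the positive root of $g(x)=\underline{\kappa}$. Since $(p\overline{\kappa})^{\frac{p}{p+1}}=\gamma^{p}$, the left-hand inequality in (\ref{eq8}) says precisely that $g(\gamma)<\underline{\kappa}=g(\beta)$. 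Consequently, to apply Lemma \ref{lem6} it suffices to prove $\beta>\gamma$, because that is exactly the hypothesis (\ref{eq11}); the whole theorem then follows once I know that $g$ is strictly increasing on $(0,\infty)$.

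Establishing this monotonicity is where the right-hand inequality in (\ref{eq8}) enters, and it is the technical heart of the argument. I would differentiate $g$, factor out the positive quantity $(\lambda_{\max}(Q)+\overline{\kappa}x^{-p})^{p-1}$ and the positive factor $\overline{\kappa}\,x^{-(p+1)}$, thereby reducing the sign of $g'(x)$ to that of
$$\psi(x)=\frac{\lambda_{\max}(Q)}{\overline{\kappa}}\,x^{p+1}-(p^{2}-1)\,x+p^{2}\lambda_{\min}(Q).$$
Because $p>1$, the function $\psi$ is strictly convex on $(0,\infty)$ with a single interior minimizer $x_{*}=\big((p-1)\overline{\kappa}/\lambda_{\max}(Q)\big)^{1/p}$, the unique root of $\psi'(x)=0$. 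Substituting $x_{*}$ back and using $\frac{\lambda_{\max}(Q)}{\overline{\kappa}}x_{*}^{p+1}=(p-1)x_{*}$ collapses the minimum value to $\psi(x_{*})=p\,[\,p\,\lambda_{\min}(Q)-(p-1)x_{*}\,]$. The decisive point is that $\psi(x_{*})>0$ is equivalent, after rewriting it as $x_{*}<p\lambda_{\min}(Q)/(p-1)$ and raising to the $p$-th power, to exactly the right-hand inequality $\overline{\kappa}<\lambda_{\max}(Q)(p\lambda_{\min}(Q))^{p}/(p-1)^{p+1}$ of (\ref{eq8}). Hence that hypothesis forces $\psi>0$ throughout $(0,\infty)$, so $g'>0$ there and $g$ is strictly increasing.

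With strict monotonicity in hand the rest is immediate: $g$ is continuous and strictly increasing with $g(\lambda_{\min}(Q))=0$ and $g(x)\to\infty$, so $g(x)=\underline{\kappa}$ possesses a unique root $\beta$, which is also genuinely the one named in Lemma \ref{lem6}. Then $g(\gamma)<g(\beta)$ combined with monotonicity yields $\gamma<\beta$, i.e. $\beta>(p\overline{\kappa})^{\frac{1}{p+1}}$, and Lemma \ref{lem6} delivers a unique positive definite solution $X\in[\beta I,\alpha I]$ of (\ref{eq1}). I expect the monotonicity step to be the main obstacle: the two competing factors in $g$ (an increasing linear term and a decreasing power term) make $g$ a priori non-monotone, and the entire force of the second inequality in (\ref{eq8}) is to push the minimum of $\psi$ above zero; getting the algebra of $\psi(x_{*})$ to simplify cleanly onto that inequality is the one place demanding care. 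The only minor side points, namely $\gamma>0$ and the uniqueness of $\beta$, are both settled for free by the strict monotonicity just established.
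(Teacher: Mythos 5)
Your proposal is correct and follows essentially the same route as the paper: reduce the claim to showing that the defining function of $\beta$ is strictly increasing, use the right-hand inequality of (\ref{eq8}) to force positivity of the minimum of the derivative's sign factor, and then read the left-hand inequality as $g(\gamma)<g(\beta)$ to get $\beta>(p\overline{\kappa})^{\frac{1}{p+1}}$ and invoke Lemma \ref{lem6}. The only difference is cosmetic: you minimize $\psi(x)=x\,g_{\mathrm{paper}}(x)$ (convex, minimizer $x_{*}=((p-1)\overline{\kappa}/\lambda_{\max}(Q))^{1/p}$) while the paper minimizes its $g$ directly at $(\lambda_{\min}(Q)p\overline{\kappa}/\lambda_{\max}(Q))^{1/(p+1)}$, and both positivity conditions collapse to the same right-hand inequality.
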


\begin{proof}
We first prove $$\beta>(p\overline{\kappa})^{\frac{1}{p+1}}.$$

Let $$f(x)=(x-\lambda_{min}(Q))\left(\lambda_{max}(Q)+\frac{\overline{\kappa}}{x^{p}}\right)^{p}-\underline{\kappa}.$$
By computaiton, we obtain
$$f'(x)=\frac{\overline{\kappa}}{x^{p}}\left(\lambda_{max}(Q)+\frac{\overline{\kappa}}{x^{p}}\right)^{p-1}
\left(\frac{\lambda_{max}(Q)}{\overline{\kappa}}x^{p}+p^{2}\lambda_{min}(Q)x^{-1}+1-p^{2}\right).$$
Define that $$g(x)=\frac{\lambda_{max}(Q)}{\overline{\kappa}}x^{p}+p^{2}\lambda_{min}(Q)x^{-1}+1-p^{2}.$$ Then
$g(x)$ is decreasing on $[0,\;\left(\frac{\lambda_{min}(Q)p\overline{\kappa}}{\lambda_{max}(Q)}\right)^{\frac{1}{p+1}}]$
 and increasing on $[\left(\frac{\lambda_{min}(Q)p\overline{\kappa}}{\lambda_{max}(Q)}\right)^{\frac{1}{p+1}},\;+\infty),$
 which implies that $$g_{\min}=g\left(\left(\frac{\lambda_{min}(Q)p\overline{\kappa}}{\lambda_{max}(Q)}\right)^{\frac{1}{p+1}}\right)
 =(1+p)\left(\frac{(\lambda_{min}(Q)p)^{\frac{p}{p+1}}\lambda_{max}^{\frac{1}{p+1}}(Q)}{(\overline{\kappa})^{\frac{1}{p+1}}}
 +1-p\right).$$
 According to the condition $\overline{\kappa}<\frac{\lambda_{max}(Q)(\lambda_{min}(Q)p)^{p}}{(p-1)^{p+1}},$ it follows
 that $g_{\min}>0.$
 Noting that $$f'(x)=\frac{\overline{\kappa}}{x^{p}}\left(\lambda_{max}(Q)+\frac{\overline{\kappa}}{x^{p}}\right)^{p-1}g(x),$$
 which implies that $f(x)$ is increasing on $(0,\;+\infty).$
 Considering the condition (\ref{eq8}), one sees that $f((p\overline{\kappa})^{\frac{1}{p+1}})<0.$ Combining that and the definition of $\beta$ in Lemma \ref{lem6}, we obtain $\beta>(p\overline{\kappa})^{\frac{1}{p+1}}.$ By Lemma \ref{lem6}, Eq.(\ref{eq1}) has a unique positive definite solution.
\end{proof}

\section{Perturbation bound for $X-A^{*}X^{-p}A=Q\;(p>1)$}

Li and Zhang in \cite{s27} proved that there always exists a unique positive definite solution to the equation $X-A^*X^{-p}A=Q\;(0<p<1)$. They also obtained a perturbation bound for the unique solution.
But their approaches will become invalid for the case of $p>1.$ Since the equation $X-A^*X^{-p}A=Q\;(p>1)$ does not always have a unique positive definite solution, there are two difficulties for perturbation analysis to the equation $X-A^*X^{-p}A=Q\;(p>1)$. One difficulty is how to find some reasonable restrictions on the coefficient matrices of perturbed equation ensuring this equation has a unique positive definite solution. The other difficulty is how to find an expression of $\Delta X$ which is easy to handle.

Assume that the coefficient matrix $A$ is perturbed to $\widetilde{A}=\Delta A+A$.
Let $\widetilde{X}=\Delta X+X$ with $\Delta X\in \mathcal{H}^{n\times n}$ satisfying the perturbed equation
\begin{equation} \label{eq9}                                                                      
\widetilde{X}-\widetilde{A}^*\widetilde{X}^{-p}\widetilde{A}=Q,\;\;p>1.
\end{equation}

In the following, we derive a perturbation estimate for the positive definite solution to the
 matrix equation $X-A^*X^{-p}A=Q\;(p>1)$ beginning with the lemma.

\begin{lemma}\label{lem7}
\cite{s40}
If$$p\|A\|^{2}<\lambda_{\min}^{p+1}(Q),$$ then Eq.(\ref{eq1}) has a unique positive definite solution $X,$ where $X\geq \lambda_{\min}(Q)I.$
\end{lemma}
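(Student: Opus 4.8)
The plan is to recast Eq.(\ref{eq1}) as a fixed-point equation and apply the Banach contraction principle on a suitably chosen closed set, with the three integral identities of Lemma \ref{lem8} supplying the crucial contraction estimate. Define the map $\mathcal{F}(X)=Q+A^{*}X^{-p}A$ on the set $\Omega=\{X\in\mathcal{H}^{n\times n}:X\geq\lambda_{\min}(Q)I\}$. First I would observe that any positive definite solution $X$ of Eq.(\ref{eq1}) automatically satisfies $X=Q+A^{*}X^{-p}A\geq Q\geq\lambda_{\min}(Q)I$, since $A^{*}X^{-p}A\geq 0$; this both delivers the claimed lower bound $X\geq\lambda_{\min}(Q)I$ and shows that every solution lies in $\Omega$, so it suffices to locate a unique fixed point of $\mathcal{F}$ in $\Omega$. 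That $\mathcal{F}$ maps $\Omega$ into itself is immediate from the same inequality, and $\Omega$ is a closed (hence complete) subset of the finite-dimensional space $\mathcal{H}^{n\times n}$.

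The heart of the argument is a Lipschitz estimate for the map $X\mapsto X^{-p}$ on $\Omega$. For $X,Y\in\Omega$ I would write, using Lemma \ref{lem8}$(ii)$ with $r=p$,
$$X^{-p}-Y^{-p}=\frac{1}{\Gamma(p)}\int_{0}^{\infty}\left(e^{-sX}-e^{-sY}\right)s^{p-1}\,ds,$$
and then expand the integrand with the third identity of Lemma \ref{lem8} (taking the role of $A$ to be $-sY$ and $A+B$ to be $-sX$, so that $B=-s(X-Y)$) to obtain
$$e^{-sX}-e^{-sY}=-s\int_{0}^{1}e^{-(1-t)sY}(X-Y)e^{-tsX}\,dt.$$
Bounding the exponential factors by Lemma \ref{lem8}$(i)$, namely $\|e^{-sX}\|=e^{-s\lambda_{\min}(X)}\leq e^{-s\lambda_{\min}(Q)}$ and likewise for $Y$, gives $\|e^{-sX}-e^{-sY}\|\leq s\,e^{-s\lambda_{\min}(Q)}\|X-Y\|$, since the $t$-dependence in the exponent sums to $s$. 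Substituting back and evaluating the scalar Gamma integral $\int_{0}^{\infty}s^{p}e^{-s\lambda_{\min}(Q)}\,ds=\Gamma(p+1)\lambda_{\min}^{-(p+1)}(Q)=p\,\Gamma(p)\lambda_{\min}^{-(p+1)}(Q)$ then yields the clean bound
$$\|X^{-p}-Y^{-p}\|\leq\frac{p}{\lambda_{\min}^{p+1}(Q)}\|X-Y\|.$$

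With this estimate in hand I would conclude that
$$\|\mathcal{F}(X)-\mathcal{F}(Y)\|=\|A^{*}(X^{-p}-Y^{-p})A\|\leq\|A\|^{2}\|X^{-p}-Y^{-p}\|\leq\frac{p\|A\|^{2}}{\lambda_{\min}^{p+1}(Q)}\|X-Y\|,$$
so the hypothesis $p\|A\|^{2}<\lambda_{\min}^{p+1}(Q)$ makes $\mathcal{F}$ a contraction on $\Omega$ with constant strictly less than $1$. The Banach fixed-point theorem then furnishes a unique fixed point in $\Omega$, which is precisely the desired unique positive definite solution. I expect the main obstacle to be this contraction estimate: correctly combining the three parts of Lemma \ref{lem8} and controlling the matrix exponentials uniformly in $s$ and $t$ so that the double integral collapses to the scalar Gamma integral; once that is secured, the fixed-point machinery is routine. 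A minor point worth checking is that although the lemma sits in the $p>1$ section, the argument goes through for every $p>0$, since Lemma \ref{lem8}$(ii)$ holds for all $r>0$.
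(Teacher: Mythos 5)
Your proof is correct: the contraction estimate $\|X^{-p}-Y^{-p}\|\leq p\,\lambda_{\min}^{-(p+1)}(Q)\|X-Y\|$ obtained from the three parts of Lemma \ref{lem8} is exactly the computation the paper itself performs (for other purposes) in the proof of Theorem \ref{thm1} and Lemma \ref{lem5}, where the same constant $p\|A\|^{2}/\lambda_{\min}^{p+1}(Q)<1$ appears, and the passage to uniqueness via $X\geq Q\geq\lambda_{\min}(Q)I$ and the Banach fixed-point theorem on the closed set $\Omega$ is sound. Note only that the paper does not prove this lemma at all --- it is imported from \cite{s40} --- so your argument supplies a self-contained proof in the same spirit as the paper's surrounding analysis, and your closing observation that it works for every $p>0$ is accurate.
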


\begin{theorem}\label{thm1}

If
 \begin{equation} \label{eq12}
\|A\|<\sqrt{\frac{\lambda_{\min}^{p+1}(Q)}{p}} \;\;\mbox{and}\;\;
\|\Delta A\|<\sqrt{\frac{\lambda_{\min}^{p+1}(Q)}{p}}-\|A\|,
\end{equation}
then
$$X-A^{*}X^{-p}A=Q\;\;\mbox{and}\;\;\widetilde{X}-\widetilde{A}^*\widetilde{X}^{-p}\widetilde{A}=Q$$
have  unique positive definite solutions $X$ and $\widetilde{X}$, respectively.
Furthermore, $$\frac{\|\widetilde{X}-X\|}{\|X\|}\leq\frac{(2\|A\|+\|\Delta A\|)}{\lambda_{\min}^{p+1}(Q)-p\|A\|^{2}}\|\Delta A\|\equiv\varrho.$$
\end{theorem}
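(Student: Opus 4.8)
The plan is to first establish existence and uniqueness of both solutions directly from Lemma~\ref{lem7}, and then to estimate $\Delta X=\widetilde X-X$ by subtracting the two equations and reducing the whole problem to a single bound on $\|\widetilde X^{-p}-X^{-p}\|$, which the integral machinery of Lemma~\ref{lem8} supplies. For the existence part, I would note that the first inequality in (\ref{eq12}) is exactly $p\|A\|^2<\lambda_{\min}^{p+1}(Q)$, so Lemma~\ref{lem7} yields a unique positive definite $X$ with $X\geq\lambda_{\min}(Q)I$. The second inequality forces $\|\widetilde A\|\leq\|A\|+\|\Delta A\|<\sqrt{\lambda_{\min}^{p+1}(Q)/p}$, hence $p\|\widetilde A\|^2<\lambda_{\min}^{p+1}(Q)$, and Lemma~\ref{lem7} applied to the perturbed equation (\ref{eq9}) gives a unique $\widetilde X\geq\lambda_{\min}(Q)I$. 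In particular both $\|X^{-p}\|$ and $\|\widetilde X^{-p}\|$ are bounded by $\lambda_{\min}^{-p}(Q)$, which I use throughout.

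Next I would subtract $X-A^*X^{-p}A=Q$ from $\widetilde X-\widetilde A^*\widetilde X^{-p}\widetilde A=Q$ and insert the mixed terms $\widetilde A^*\widetilde X^{-p}A$ and $A^*\widetilde X^{-p}A$ to split the difference into three pieces:
$$\Delta X=\widetilde A^*\widetilde X^{-p}\,\Delta A+\Delta A^*\widetilde X^{-p}A+A^*(\widetilde X^{-p}-X^{-p})A.$$
Taking spectral norms and using $\|\widetilde X^{-p}\|\leq\lambda_{\min}^{-p}(Q)$ and $\|\widetilde A\|\leq\|A\|+\|\Delta A\|$ bounds the first two terms by $\lambda_{\min}^{-p}(Q)\,(2\|A\|+\|\Delta A\|)\,\|\Delta A\|$, which is exactly the numerator structure appearing in $\varrho$.

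The crux is the third term, where I need $\|\widetilde X^{-p}-X^{-p}\|$ controlled by $\|\Delta X\|$; this is where all three parts of Lemma~\ref{lem8} combine. Writing $\widetilde X^{-p}-X^{-p}=\frac{1}{\Gamma(p)}\int_0^\infty(e^{-s\widetilde X}-e^{-sX})s^{p-1}\,ds$ by part (ii), then applying the Duhamel-type identity of part (iii) with $A=-sX$ and $B=-s\Delta X$, and estimating the two exponential factors by part (i) via $\lambda_{\min}(X),\lambda_{\min}(\widetilde X)\geq\lambda_{\min}(Q)$, collapses the inner $t$-integral to $s\,e^{-s\lambda_{\min}(Q)}\|\Delta X\|$. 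The surviving $s$-integral is a Gamma integral that evaluates to $\Gamma(p+1)/\lambda_{\min}^{p+1}(Q)$, giving $\|\widetilde X^{-p}-X^{-p}\|\leq p\,\lambda_{\min}^{-(p+1)}(Q)\,\|\Delta X\|$. I expect this integral estimate to be the main obstacle, since it is the only step where the three parts of Lemma~\ref{lem8} must be chained together and the precise constant $p/\lambda_{\min}^{p+1}(Q)$ must be produced.

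Finally, substituting this back gives a self-referential inequality $\|\Delta X\|\leq\lambda_{\min}^{-p}(Q)(2\|A\|+\|\Delta A\|)\|\Delta A\|+p\|A\|^2\lambda_{\min}^{-(p+1)}(Q)\|\Delta X\|$, which I would solve for $\|\Delta X\|$; the hypothesis $p\|A\|^2<\lambda_{\min}^{p+1}(Q)$ guarantees that the coefficient $1-p\|A\|^2\lambda_{\min}^{-(p+1)}(Q)$ is strictly positive, so the rearrangement is valid. Dividing by $\|X\|\geq\lambda_{\min}(Q)$, which follows from $X\geq\lambda_{\min}(Q)I$, then yields the stated bound $\varrho$.
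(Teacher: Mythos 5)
Your proposal is correct and follows essentially the same route as the paper: existence via Lemma~\ref{lem7} applied to both equations, the same three-term splitting of $\Delta X$, and the same chaining of the three parts of Lemma~\ref{lem8} to get the constant $p/\lambda_{\min}^{p+1}(Q)$ in front of $\|\Delta X\|$. The only cosmetic difference is that you bound $\|A^*(\widetilde X^{-p}-X^{-p})A\|$ from above and rearrange a self-referential inequality, whereas the paper applies the reverse triangle inequality to bound $\|\Delta X+A^*(X^{-p}-\widetilde X^{-p})A\|$ from below; these are the same computation.
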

\begin{proof}
By (\ref{eq12}), it follows that $\|\widetilde{A}\|\leq\|A\|+\|\Delta A\|\leq\sqrt{\frac{\lambda_{\min}^{p+1}(Q)}{p}}.$
According to Lemma \ref{lem7}, the condition (\ref{eq12}) ensures that Eq.(\ref{eq1}) and Eq.(\ref{eq9})
have unique positive definite solutions $X$ and $\widetilde{X},$ respectively. Furthermore, we obtain that
\begin{equation}\label{eq14}
X\geq \lambda_{\min}(Q)I,\;\;\;\widetilde{X}\geq \lambda_{\min}(Q)I.
\end{equation}
 Subtracting (\ref{eq9}) from (\ref{eq1}) gives
\begin{eqnarray}\label{eq15}
&& \Delta X=\widetilde{A}^{*}\widetilde{X}^{-p}\widetilde{A}-A^{*}X^{-p}A
 =A^{*}(\widetilde{X}^{-p}-X^{-p})A+\Delta A^{*}\widetilde{X}^{-p}A+\widetilde{A}^{*}
 \widetilde{X}^{-p}\Delta A.
 \end{eqnarray}\\
 By Lemma \ref{lem8} and inequalities in (\ref{eq14}), we have
 \begin{eqnarray}\label{eq27}
 &&\|\Delta X+A^{*}X^{-p}A-{A}^{*}\widetilde{X}^{-p}{A}\|\nonumber\\
  &=& \|\Delta X+A^{*}\frac{1}{\Gamma(p)}\int_{0}^{\infty}(e^{-sX}-e^{-s\widetilde{X}})s^{p-1}dsA\|\nonumber\\
  &=&\|\Delta X+A^{*}\frac{1}{\Gamma(p)}\int_{0}^{\infty}\int_{0}^{1}e^{-(1-t)s\widetilde{X}}\Delta X e^{-tsX}dts^{p}dsA\|\nonumber\\
  &\geq &\|\Delta X\|-\frac{\|A\|^{2}\|\Delta X\|}{\Gamma(p)}\int_{0}^{\infty}\int_{0}^{1}\|e^{-(1-t)s\widetilde{X}}\| \|e^{-tsX}\|dts^{p}ds\nonumber\\
  &\geq &\|\Delta X\|-\frac{\|A\|^{2}\|\Delta X\|}{\Gamma(p)}\int_{0}^{\infty}\int_{0}^{1}e^{-(1-t)s\lambda_{\min}(\widetilde{X})} e^{-ts\lambda_{\min}(X)}dts^{p}ds\nonumber\\
  &\geq &\|\Delta X\|-\frac{\|A\|^{2}\|\Delta X\|}{\Gamma(p)}\int_{0}^{\infty}\int_{0}^{1}e^{-(1-t)s\lambda_{\min}(Q)} e^{-ts\lambda_{\min}(Q)}dts^{p}ds\nonumber\\
  &=&\|\Delta X\|-\frac{\|A\|^{2}\|\Delta X\|}{\Gamma(p)}\int_{0}^{\infty}\int_{0}^{1}e^{-s\lambda_{\min}(Q)}dts^{p}ds\nonumber\\
  &=&\|\Delta X\|-\frac{\Gamma(p+1)}{\Gamma(p)}\cdot\frac{\|A\|^{2}\|\Delta X\|}{\lambda_{\min}^{p+1}(Q)}\nonumber\\
  &=&\frac{\lambda_{\min}^{p+1}(Q)-p\|A\|^{2}}{\lambda_{\min}^{p+1}(Q)}\|\Delta X\|.
 \end{eqnarray}
 Noting (\ref{eq12}), we have $$\lambda_{\min}^{p+1}(Q)-p\|A\|^{2}>0.$$
 Combining (\ref{eq15}) and (\ref{eq27}), one sees that
 \begin{eqnarray*}
 \frac{\lambda_{\min}^{p+1}(Q)-p\|A\|^{2}}{\lambda_{\min}^{p+1}(Q)}\|\Delta X\|&\leq &\|\Delta A^{*}\widetilde{X}^{-p}A+\widetilde{A}^{*}
 \widetilde{X}^{-p}\Delta A\|
 \leq (\|\Delta A\|+2\|A\|)\|\Delta A\|
 \|\widetilde{X}^{-p}\|\\
 &\leq& (\|\Delta A\|+2\|A\|)\|\Delta A\|\lambda_{\min}^{-p}(Q),
\end{eqnarray*}
which implies that
 \begin{eqnarray*}
 \frac{\|\Delta X\|}{\|X\|}
 &\leq& \frac{(\|\Delta A\|+2\|A\|)}{\lambda_{\min}^{p+1}(Q)-p\|A\|^{2}}\|\Delta A\|. \\
\end{eqnarray*}
 \end{proof}

\section{Condition number for $X-A^{*}X^{-p}A=Q\;(p>1)$}

A condition number is a measurement of the sensitivity of the positive definite stabilizing solutions to
small changes in the coefficient matrices.
In this section, we apply the theory of condition number developed by Rice \cite{s37} to
derive explicit expressions of the condition number for the matrix equation $X-A^{*}X^{-p}A=Q\;(p>1).$

Here we consider the perturbed equation
\begin{equation} \label{eq23}                                                                    
\widetilde{X}-\widetilde{A}^*\widetilde{X}^{-p}\widetilde{A}=\widetilde{Q},\;\;p>1,
\end{equation}
where $\widetilde{A}$ and $\widetilde{Q}$ are small perturbations
of $A$ and $Q$ in Eq.(\ref{eq1}), respectively.

Suppose that
$
p\|A\|^{2}<\lambda_{\min}^{p+1}(Q) \;\;\mbox{and}\;\;
p\|\widetilde{A}\|^{2}<\lambda_{\min}^{p+1}(\widetilde{Q}).
$
According to Lemma \ref{lem7}, Eq.(\ref{eq1}) and
Eq.(\ref{eq23}) have unique positive definite solutions $X$
and $\widetilde{X}$, respectively. Let $\Delta X=\widetilde{X}-X$, $\Delta
Q=\widetilde{Q}-Q$ and $\Delta A=\widetilde{A}-A$.

 Subtracting (\ref{eq23}) from (\ref{eq1}) gives

 \begin{eqnarray*}
 \Delta X&=&\widetilde{A}^{*}\widetilde{X}^{-p}\widetilde{A}-A^{*}X^{-p}A+\Delta Q
 =A^{*}(\widetilde{X}^{-p}-X^{-p})A+\Delta A^{*}\widetilde{X}^{-p}A+\widetilde{A}^{*}
 \widetilde{X}^{-p}\Delta A+\Delta Q\\
 &=&-A^{*}\frac{1}{\Gamma(p)}\int_{0}^{\infty}(e^{-sX}-e^{-s\widetilde{X}})s^{p-1}d s A
 +\Delta A^{*}\widetilde{X}^{-p}A+\widetilde{A}^{*}
 \widetilde{X}^{-p}\Delta A+\Delta Q\\
 &=&-A^{*}\frac{1}{\Gamma(p)}\int_{0}^{\infty}\int_{0}^{1}e^{-(1-t)s\widetilde{X}}
 (\widetilde{X}-X)e^{-tsX}dts^{p}d s A +\Delta A^{*}\widetilde{X}^{-p}A+\widetilde{A}^{*}
 \widetilde{X}^{-p}\Delta A+\Delta Q\\
 &=&-A^{*}\frac{1}{\Gamma(p)}\int_{0}^{\infty}\int_{0}^{1}(e^{-(1-t)s\widetilde{X}}
 -e^{-(1-t)sX})
 \Delta X e^{-tsX}dts^{p}d s A+\Delta Q\\
 &&-A^{*}\frac{1}{\Gamma(p)}\int_{0}^{\infty}\int_{0}^{1}e^{-(1-t)sX}
 \Delta X e^{-tsX}dts^{p}d s A
 -(\widetilde{A}^{*}X^{-p}\Delta A-\widetilde{A}^{*}(X+\Delta X)^{-p}\Delta A)\\
 &&+\widetilde{A}^{*}X^{-p}\Delta A-
 (\Delta A^{*}X^{-p}A-\Delta A^{*}(X+\Delta X)^{-p}A)+\Delta A^{*}X^{-p}A\\
 &=&A^{*}\frac{1}{\Gamma(p)}\int_{0}^{\infty}\int_{0}^{1}\int_{0}^{1}
 e^{-(1-m)(1-t)sX}\Delta X e^{-m(1-t)s\widetilde{X}}\Delta Xe^{-tsX}
 d m(1-t)dts^{p+1}dsA+\Delta Q\\
 &&-A^{*}\frac{1}{\Gamma(p)}\int_{0}^{\infty}\int_{0}^{1}e^{-(1-t)sX}
 \Delta X e^{-tsX}dts^{p}d s A+\Delta A^{*}X^{-p}\Delta A+A^{*}X^{-p}\Delta A+
 \Delta A^{*}X^{-p}A\\
 &&-\widetilde{A}^{*}\frac{1}{\Gamma(p)}\int_{0}^{\infty}\int_{0}^{1}e^{-(1-t)s(X+\Delta X)}
 \Delta X e^{-tsX}dts^{p}ds\Delta A\\
 &&-\Delta A^{*}\frac{1}{\Gamma(p)}\int_{0}^{\infty}\int_{0}^{1}e^{-(1-t)s(X+\Delta X)}
 \Delta X e^{-tsX}dts^{p}d s A.
 \end{eqnarray*}
 Therefore
 \begin{equation}    \label{eq18}                                        
 \Delta X+A^{*}\frac{1}{\Gamma(p)}\int_{0}^{\infty}\int_{0}^{1}e^{-(1-t)sX}
 \Delta X e^{-tsX}dts^{p}dsA=E+h(\Delta X),
 \end{equation}
 where
 \begin{eqnarray*}
 &&B=X^{-p}A,\nonumber\\
 &&E=\Delta Q+(B^{*}\Delta A+\Delta A^{*}B)+\Delta A^{*}X^{-p}\Delta A,\nonumber\\
 &&h(\Delta X)=A^{*}\frac{1}{\Gamma(p)}\int_{0}^{\infty}\int_{0}^{1}\int_{0}^{1}
 e^{-(1-m)(1-t)sX}\Delta X e^{-m(1-t)s\widetilde{X}}\Delta Xe^{-tsX}
 d m(1-t)dts^{p+1}dsA\\
 &&{\hspace{1.15cm}}-\widetilde{A}^{*}\frac{1}{\Gamma(p)}\int_{0}^{\infty}\int_{0}^{1}e^{-(1-t)s(X+\Delta X)}
 \Delta X e^{-tsX}dts^{p}ds\Delta A\nonumber\\
 &&{\hspace{1.15cm}}-\Delta A^{*}\frac{1}{\Gamma(p)}\int_{0}^{\infty}\int_{0}^{1}e^{-(1-t)s(X+\Delta X)}
 \Delta X e^{-tsX}dts^{p}d s A.\nonumber
 \end{eqnarray*}

 \begin{lemma}  \label{lem5}                                                
If
\begin{equation}\label{eq16}
p\|A\|^{2}<\lambda_{\min}^{p+1}(Q),
\end{equation}
 then the linear operator $\mathbf{V}:\mathcal{H}^{n\times
n}\rightarrow\mathcal{H}^{n\times n}$ defined by
\begin{equation}\label{eq17}
 \mathbf{V}W=W+\frac{1}{\Gamma(p)}\int_{0}^{\infty}\int_{0}^{1}A^{*}e^{-(1-t)sX}W
 e^{-tsX}Adts^{p}ds, \;\;\;W\in\mathcal{H}^{n\times n}.
\end{equation}
is invertible.
\end{lemma}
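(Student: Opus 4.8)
The plan is to prove that $\mathbf{V}$ is injective on the finite-dimensional space $\mathcal{H}^{n\times n}$; since $\mathbf{V}$ is a linear endomorphism of that space, injectivity immediately yields invertibility. Writing $\mathbf{V}=\mathbf{I}+\mathbf{L}$, where $\mathbf{I}$ is the identity operator and $\mathbf{L}$ denotes the integral part
$$\mathbf{L}W=\frac{1}{\Gamma(p)}\int_{0}^{\infty}\!\!\int_{0}^{1}A^{*}e^{-(1-t)sX}W\,e^{-tsX}A\,dt\,s^{p}\,ds,$$
I would first check that $\mathbf{L}$ (and hence $\mathbf{V}$) really maps Hermitian matrices to Hermitian matrices: taking the adjoint of the integrand, using that $X$ and $W$ are Hermitian, and applying the change of variable $t\mapsto 1-t$ in the inner integral recovers the original expression, so $(\mathbf{L}W)^{*}=\mathbf{L}W$ whenever $W=W^{*}$. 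The whole argument then reduces to the single operator-norm estimate $\|\mathbf{L}\|<1$.

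To bound $\|\mathbf{L}W\|$ in the spectral norm, I would pass the norm inside the double integral and use submultiplicativity together with Lemma \ref{lem8}$(i)$, which gives $\|e^{-(1-t)sX}\|=e^{-(1-t)s\lambda_{\min}(X)}$ and $\|e^{-tsX}\|=e^{-ts\lambda_{\min}(X)}$. The key simplification is that the product of these two exponentials equals $e^{-s\lambda_{\min}(X)}$, independently of $t$, so the inner $t$-integral contributes merely a factor $1$. The surviving integral $\int_{0}^{\infty}e^{-s\lambda_{\min}(X)}s^{p}\,ds=\Gamma(p+1)/\lambda_{\min}^{p+1}(X)$ is elementary, and using $\Gamma(p+1)=p\,\Gamma(p)$ I arrive at
$$\|\mathbf{L}W\|\le\frac{p\|A\|^{2}}{\lambda_{\min}^{p+1}(X)}\,\|W\|.$$

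Next I would invoke Lemma \ref{lem7}: the hypothesis (\ref{eq16}) is exactly its assumption, so Eq.(\ref{eq1}) has a unique positive definite solution $X$ with $X\ge\lambda_{\min}(Q)I$, whence $\lambda_{\min}(X)\ge\lambda_{\min}(Q)$ and
$$\|\mathbf{L}W\|\le\frac{p\|A\|^{2}}{\lambda_{\min}^{p+1}(Q)}\,\|W\|.$$
Condition (\ref{eq16}) states precisely that the constant $p\|A\|^{2}/\lambda_{\min}^{p+1}(Q)$ is strictly less than $1$, so $\|\mathbf{L}\|<1$. Finally, if $\mathbf{V}W=0$ then $W=-\mathbf{L}W$ and $\|W\|\le\|\mathbf{L}\|\,\|W\|$, which forces $W=0$; thus $\mathbf{V}$ is injective and therefore invertible. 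I do not anticipate a serious obstacle, as the computation is direct, but the one point requiring care is the cancellation of the $t$-dependence in the exponential product: it is exactly this cancellation that makes the resulting bound meet the threshold in (\ref{eq16}) sharply, with no spurious constant left over.
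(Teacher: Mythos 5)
Your proposal is correct and follows essentially the same route as the paper: split $\mathbf{V}=\mathbf{I}+\mathbf{L}$, use Lemma \ref{lem8}$(i)$ and the bound $\lambda_{\min}(X)\geq\lambda_{\min}(Q)$ from Lemma \ref{lem7} to get $\|\mathbf{L}\|\leq p\|A\|^{2}/\lambda_{\min}^{p+1}(Q)<1$, and conclude invertibility. The only cosmetic difference is that you phrase the final step as injectivity on a finite-dimensional space while the paper invokes the standard fact that $\|\mathbf{L}\|<1$ makes $\mathbf{I}+\mathbf{L}$ invertible.
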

\begin{proof}
 Define the operator
$\mathbf{R}:\mathcal{H}^{n\times
n}\rightarrow\mathcal{H}^{n\times n} $ by
$$ \mathbf{R}Z=\!\frac{1}{\Gamma(p)}\int^{\infty}_{0}\int_{0}^{1}
A^{*}e^{-(1-t)sX}Z
 e^{-tsX}Adts^{p}ds,
\;Z\in\mathcal{H}^{n\times n}\!\!,
$$
it follows that
$$
\mathbf{V}W=W+\mathbf{R}W.$$
Then $\mathbf{V}$ is invertible if and only if $I+\mathbf{R}$ is invertible.

According to Lemma \ref{lem8} and the condition (\ref{eq16}), we
have
\begin{eqnarray*}
||\mathbf{R}W||&\leq &||A||^{2}||W||\frac{
1}{\Gamma(p)}\!\int^{\infty}_{0}\int_{0}^{1}||e^{-(1-t)sX}||||e^{-tsX}||dts^{p}ds\\
&= &||A||^{2}||W||||\frac{1}{\Gamma(p)}\!\int^{\infty}_{0}\int_{0}^{1}e^{-(1-t)s\lambda_{min}
(X)}e^{-ts\lambda_{min}
(X)}dts^{p}ds\\
&\leq &||A||^{2}||W||||\frac{1}{\Gamma(p)}\!\int^{\infty}_{0}\int_{0}^{1}e^{-(1-t)s\lambda_{min}(Q)}
e^{-ts\lambda_{min}(Q)}dts^{p}ds\\
& =&||A||^{2}||W||\frac{1}{\Gamma(p)}\int^{\infty}_{0}e^{-s\lambda_{min}(Q)}s^{p}ds\\
&=&\frac{p||A||^{2}}{\lambda_{min}^{p+1}(Q)}||W||
<||W||,
\end{eqnarray*}
which implies that $||\mathbf{R}||<1$ and $I+\mathbf{R}$ is
invertible. Therefore, the operator $\mathbf{V}$ is
invertible.
\end{proof}
Thus, we can rewrite (\ref{eq18}) as
$$\Delta X=\mathbf{V}^{-1}\Delta Q+\mathbf{V}^{-1}(B^{*}\Delta A+\Delta A^{*}B)+\mathbf{V}^{-1}
(\Delta A^{*}X^{-p}\Delta A)+\mathbf{V}^{-1}(h(\Delta X)).$$
Obviously,
\begin{equation}    \label{eq21}                                                             
\Delta X=\mathbf{V}^{-1}\Delta Q+\mathbf{V}^{-1}(B^{*}\Delta A+\Delta A^{*}B)+O(||(\Delta A,\;\Delta Q)||_{F}^{2}),
(\Delta A,\;\Delta Q)\rightarrow 0.
\end{equation}

By the theory of condition number developed by Rice \cite{s19}, we define
the condition number of the Hermitian positive definite solution
$X$ to the matrix equation $X-A^{*}X^{-p}A=Q\;(p>1)$ by
\begin{equation}\label{eq22}
c(X)=\lim_{\delta\rightarrow 0}\sup_{||(\frac{\Delta A}{\eta},
\frac{\Delta Q}{\rho})||_{F}\leq\delta}\frac{||\Delta
X||_{F}}{\xi\delta},
\end{equation}

where $\xi$, $\eta$ and $\rho$ are positive parameters. Taking
$\xi=\eta=\rho=1$  in (\ref{eq22}) gives the absolute condition number $c_{abs}(X)$,
and taking $\xi=||X||_{F}$, $\eta=||A||_{F}$  and $\rho=||Q||_{F}$ in (\ref{eq22})
gives the relative condition number $c_{rel}(X)$.

Substituting (\ref{eq21}) into (\ref{eq22}), we get
\begin{eqnarray*}
c(X)&=&\frac{1}{\xi}\!\!\!\max_{\begin{array}{c}(\frac{\Delta
A}{\eta},
\frac{\Delta Q }{\rho})\neq 0\\
 \Delta A\in\mathcal{C}^{n\times
n}, \Delta Q\in\mathcal{H}^{n\times n}
\end{array}}
\!\!\!\!\!\!\!\!\frac{||\mathbf{V}^{-1}(\Delta Q+B^{*}\Delta
A+\Delta A^{*}B) ||_{F}}{||(\frac{\Delta A}{\eta}, \frac{\Delta
Q}{\rho})||_{F}}\nonumber\\
&=&\frac{1}{\xi}\!\!\!\max_{\begin{array}{c}(E,
H)\neq 0\\
 E\in\mathcal{C}^{n\times
n}, H\in\mathcal{H}^{n\times n}
\end{array}}
\!\!\!\!\!\!\!\!\frac{||\mathbf{V}^{-1}(\rho H+\eta(B^{*}E+E^{*}B))
||_{F}}{||(E, H)||_{F}}.
\end{eqnarray*}
Let $V$ be the matrix representation of the linear operator
$\mathbf{V}$. Then it is easy to see that
\begin{equation}    \label{eq25}                                                
V=I\otimes I+\!\frac{1}{\Gamma(p)}\int^{\infty}_{0}\int_{0}^{1}
(e^{-tsX}A)^{T}\otimes (A^{*}e^{-(1-t)sX})dts^{p}ds.
\end{equation}
Let
\begin{eqnarray}\label{eq24}
&&V^{-1}=S+i\Sigma,\nonumber\\
&&V^{-1}(I\otimes B^{*})=V^{-1}(I\otimes
(X^{-p}A)^{*})=U_{1}+i\Omega_{1},\\
&&V^{-1}(B^{T}\otimes I)\Pi=V^{-1}((X^{-p}A)^{T}\otimes
I)\Pi=U_{2}+i\Omega_{2},\nonumber
\end{eqnarray}
\begin{equation}
 S_{c}=\left[\begin{array}{cc}\label{eq28}                                      
S & -\Sigma\\
\Sigma & S
\end{array}\right],\;\;\;\;                                          
U_{c}=\left[\begin{array}{cc}
U_{1}+U_{2} & \Omega_{2}-\Omega_{1}\\
\Omega_{1}+\Omega_{2} & U_{1}-U_{2}
\end{array}\right],
\end{equation}

$$\vector H=x+\textbf{i}y,\;\;\vector E=a+\textbf{i}b,
\;\;g=(x^{T}, y^{T}, a^{T}, b^{T})^{T},$$
where $x, y, a, b \in\mathcal{R}^{n^{2}},\; S, \Sigma, U_{1}, U_{2}, \Omega_{1}, \Omega_{2}\in\mathcal{R}^{n^{2}\times n^{2}},M=(E,H),$\;$\textbf{i}=\sqrt{-1},$ $\Pi$ is the vec-permutation matrix, i.e.,
$$\vector \;E^{T}=\Pi \;\vector \;E.$$

Furthermore, we obtain that
\begin{eqnarray*}
&&c(X)=\frac{1}{\xi}\max_{\begin{array}{c}M\neq 0\\
\end{array}}
\frac{||\mathbf{V}^{-1}(\rho H+\eta(B^{*}E+E^{*}B))
||_{F}}{||(E, H)||_{F}}\\
&=&\frac{1}{\xi}\max_{\begin{array}{c}M\neq 0\\
\end{array}}
\frac{||\rho{V}^{-1} \vector H+\eta{V}^{-1}((I\otimes B^{*})\vector E+(B^{T}\otimes I)\vector E^{*})
||}{\left\|\left(
\vector E, \vector H
\right)\right\|}\\
&=&\frac{1}{\xi}\!\!\max_{\begin{array}{c}M\neq 0\\
\end{array}}
\!\!\!\!\!\frac{||\rho(S+\textbf{i}\Sigma)(x+\textbf{i}y)+
\eta[(U_{1}+\textbf{i}\Omega_{1})(a+\textbf{i}b)
+(U_{2}+\textbf{i}\Omega_{2})(a-\textbf{i}b)]
||}{\left\|\left(
\vector E, \vector H
\right)\right\|}\\
&=&\frac{1}{\xi}\!\!\!\max_{\begin{array}{c}g\neq 0\\
\end{array}}
\frac{||(\rho\; S_{c}, \eta U_{c})g
||}{\|g\|}\\
&=&\frac{1}{\xi}\;||\;(\rho S_{c},\;\eta U_{c} )||,\;\;E\in\mathcal{C}^{n\times
n}, H\in\mathcal{H}^{n\times n}.
\end{eqnarray*}

Then we have the following theorem.
\begin{theorem}\label{thm6}                                           
If $p\|A\|^{2}<\lambda_{\min}^{p+1}(Q)$, then the condition number $c(X)$ defined by (\ref{eq22}) has the
explicit expression
\begin{equation}\label{eq17}                                        
c(X)=\frac{1}{\xi}\;||\;(\rho S_{c},\;\;\eta U_{c})\;||,
\end{equation}
where the matrices $S_{c}$ and $U_{c}$ are defined by
(\ref{eq25})$-$(\ref{eq28}).
\end{theorem}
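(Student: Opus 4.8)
The plan is to obtain a first-order expansion of $\Delta X$ as a linear function of the data perturbations $(\Delta A,\Delta Q)$, substitute it into the defining formula (\ref{eq22}), and then recognize the resulting maximum of a Rayleigh-type quotient as an ordinary spectral norm. The hypothesis $p\|A\|^{2}<\lambda_{\min}^{p+1}(Q)$ enters exactly where it is needed: by Lemma \ref{lem5} it guarantees that the operator $\mathbf{V}$ is invertible, so that the linearized equation can be solved for $\Delta X$.

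First I would subtract the perturbed equation (\ref{eq23}) from (\ref{eq1}) and isolate $\Delta X$. The only nonlinear ingredient is the difference $\widetilde{X}^{-p}-X^{-p}$; using the integral representation of $X^{-p}$ together with the identity $e^{A+B}-e^{A}=\int_0^1 e^{(1-t)A}Be^{t(A+B)}dt$ from Lemma \ref{lem8}, this difference can be written as an integral that is linear in $\Delta X$ plus a manifestly quadratic remainder. Collecting terms, one arrives at the operator equation (\ref{eq18}), whose left-hand side is $\mathbf{V}(\Delta X)$; applying $\mathbf{V}^{-1}$ and discarding all terms of second order in $(\Delta A,\Delta Q)$ yields the first-order expansion (\ref{eq21}), namely $\Delta X=\mathbf{V}^{-1}\Delta Q+\mathbf{V}^{-1}(B^{*}\Delta A+\Delta A^{*}B)+O(\|(\Delta A,\Delta Q)\|_F^2)$.

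Next I would substitute (\ref{eq21}) into (\ref{eq22}). Because the condition number is defined through the limit $\delta\to 0$, the $O(\|(\Delta A,\Delta Q)\|_F^2)$ term contributes nothing, and the supremum reduces to the operator norm of the linear map $(\Delta A,\Delta Q)\mapsto \mathbf{V}^{-1}(\Delta Q+B^{*}\Delta A+\Delta A^{*}B)$. Applying $\vector$ and the Kronecker-product rules turns this map into multiplication by the matrix $V$ of (\ref{eq25}), the term $\Delta A^{*}$ being handled by the vec-permutation matrix $\Pi$ via $\vector E^{T}=\Pi\,\vector E$. Splitting every complex quantity into its real and imaginary parts and stacking them into the real vector $g=(x^{T},y^{T},a^{T},b^{T})^{T}$ recasts the quotient as $\|(\rho S_c,\eta U_c)g\|/\|g\|$, whose maximum over $g\neq 0$ is precisely $\|(\rho S_c,\eta U_c)\|$; dividing by $\xi$ gives (\ref{eq17}).

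The routine steps are the manipulation of the integral representation and the Kronecker-product identities. The delicate step — and the one I expect to require the most care — is the passage from the complex matrix formulation to the real vectorized one: one must verify that separating real and imaginary parts and absorbing the conjugate transpose through $\Pi$ assembles exactly the block matrices $S_c$ and $U_c$ of (\ref{eq28}), so that the complex Frobenius norm of $\mathbf{V}^{-1}(\rho H+\eta(B^{*}E+E^{*}B))$ coincides with the real Euclidean norm $\|(\rho S_c,\eta U_c)g\|$. Getting the signs and the placement of the $\Omega_{1},\Omega_{2}$ blocks right in $U_c$ is where an error would most easily creep in.
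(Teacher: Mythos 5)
Your proposal is correct and follows essentially the same route as the paper: linearize via the integral representation $X^{-p}=\frac{1}{\Gamma(p)}\int_0^\infty e^{-sX}s^{p-1}ds$ and the exponential-difference identity to reach (\ref{eq18}), invoke the invertibility of $\mathbf{V}$ under $p\|A\|^2<\lambda_{\min}^{p+1}(Q)$ to get the first-order expansion (\ref{eq21}), then vectorize, split into real and imaginary parts, and identify the supremum as $\frac{1}{\xi}\|(\rho S_c,\eta U_c)\|$. You also correctly flag the realification step (the assembly of $S_c$ and $U_c$ with the $\Pi$-handled conjugate term) as the place requiring the most care, which matches where the paper's computation does its heaviest bookkeeping.
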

\begin{rem}\label{rem1}                                            
From (\ref{eq17}) we have the relative condition number
\begin{equation} \label{eq26}                                             
c_{rel}(X)=\frac{||\;(||Q||_{F}S_{c},\;\;||A||_{F}
U_{c})\;||}{||X||_{F}}.
\end{equation}
\end{rem}
\subsection{The real case  }
In this subsection we consider the real case, i.e.,
all the coefficient matrices $A$, $Q$ of the matrix equation $X-A^{*}X^{-p}A=Q\;(p>1)$ are real. In
such a case the corresponding solution $X$ is also real.
Completely similar arguments as in Theorem \ref{thm6} give the following
theorem.
\begin{theorem}
Let $A$, $Q$ be real, $c(X)$ be the condition number defined by
(\ref{eq22}). If $p\|A\|^{2}<\lambda_{\min}^{p+1}(Q),$ then $c(X)$ has the explicit expression
\begin{equation*}
c(X)=\frac{1}{\xi}\;||\;(\rho S_{r},\;\;\eta U_{r})\;||,
\end{equation*}
where
\begin{eqnarray*}
&&S_{r}=\left(I\otimes I+\!\frac{1}{\Gamma(p)}\int^{\infty}_{0}\int_{0}^{1}
(e^{-tsX}A)^{T}\otimes (A^{*}e^{-(1-t)sX})dts^{p}ds\right)^{-1},\\
&&U_{r}=S_{r}[I\otimes(A^{T}X^{-p})+((A^{T}X^{-p})\otimes I)\Pi].
\end{eqnarray*}
\end{theorem}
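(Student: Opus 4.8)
The plan is to mirror the derivation of Theorem~\ref{thm6} almost verbatim, exploiting the fact that in the real setting every matrix appearing in the first-order perturbation expansion is real, so the splitting into real and imaginary parts that produced the block matrices $S_{c}$ and $U_{c}$ of (\ref{eq28}) is no longer needed. First I would start from the first-order expansion (\ref{eq21}), which is available here because the hypothesis $p\|A\|^{2}<\lambda_{\min}^{p+1}(Q)$ is exactly the condition of Lemma~\ref{lem5} guaranteeing that the operator $\mathbf{V}$ of (\ref{eq17}) is invertible. Since $A$, $Q$, and hence $X$ and $X^{-p}$, are real, we have $A^{*}=A^{T}$, $B=X^{-p}A$, and $B^{*}=B^{T}=A^{T}X^{-p}$, so that $\Delta A$, $\Delta Q$, $\Delta X$ and all their vectorizations are real vectors.

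Next I would pass to the vectorized form. Applying the identity $\vector(PWR)=(R^{T}\otimes P)\vector W$ to each factor $A^{*}e^{-(1-t)sX}We^{-tsX}A$ inside the integral defining $\mathbf{V}$, and then integrating, shows that the matrix representation of $\mathbf{V}$ is precisely the matrix inverted in the definition of $S_{r}$, which coincides with (\ref{eq25}); by Lemma~\ref{lem5} this matrix is nonsingular, so $S_{r}$ is well defined and represents $\mathbf{V}^{-1}$. For the perturbation of $A$, the same vec identity together with the vec-permutation matrix $\Pi$ (defined by $\vector E^{T}=\Pi\,\vector E$) gives
\begin{equation*}
\vector\bigl(B^{*}\Delta A+\Delta A^{*}B\bigr)=\bigl[I\otimes B^{*}+(B^{T}\otimes I)\Pi\bigr]\vector \Delta A,
\end{equation*}
and substituting $B^{*}=B^{T}=A^{T}X^{-p}$ and premultiplying by $S_{r}$ produces exactly $U_{r}$. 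Hence $\vector \Delta X=S_{r}\,\vector \Delta Q+U_{r}\,\vector \Delta A+O(\|(\Delta A,\Delta Q)\|_{F}^{2})$.

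Finally I would substitute this into the definition (\ref{eq22}). Writing $E=\Delta A/\eta$, $H=\Delta Q/\rho$ and collecting the real vector $g=(\vector E^{T},\ \vector H^{T})^{T}$, the first-order numerator becomes $\|(\rho S_{r},\ \eta U_{r})\,g\|$ while the denominator is $\|g\|$, so the supremum over $(E,H)\neq 0$ equals the spectral norm $\|(\rho S_{r},\ \eta U_{r})\|$, which yields $c(X)=\frac{1}{\xi}\|(\rho S_{r},\ \eta U_{r})\|$. The higher-order remainder in (\ref{eq21}) vanishes in the limit $\delta\to 0$, exactly as in the complex case.

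I expect no genuine obstacle: the real case is strictly simpler than the complex case already established, since the absence of imaginary parts collapses the $2\times2$ block matrices of (\ref{eq28}) to the single blocks $S_{r}$ and $U_{r}$. The one point that deserves care is the bookkeeping of the transpose term $\Delta A^{*}B=\Delta A^{T}B$, whose vectorization introduces the permutation factor $\Pi$; confirming that this factor appears in exactly the position shown in $U_{r}$ is the single step where an ordering slip could occur.
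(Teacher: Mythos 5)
Your proposal is correct and follows exactly the route the paper intends: the paper gives no separate proof for the real case, stating only that ``completely similar arguments as in Theorem \ref{thm6}'' apply, and your argument is precisely that specialization, with the real/imaginary splitting collapsing so that $S_{c}$, $U_{c}$ reduce to the single blocks $S_{r}$, $U_{r}$. The one detail worth noting is that your final vector $g$ lists the $\Delta A$-block before the $\Delta Q$-block while the paper's matrix $(\rho S_{r},\,\eta U_{r})$ places the $\Delta Q$-block first, but this block reordering does not change the spectral norm, so the conclusion is unaffected.
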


\begin{rem}\label{rem2}                                              
In the real case the relative condition number is given by
\begin{equation*}
c_{rel}(X)=\frac{||\;(||Q||_{F}S_{r},\;\;||A||_{F}
U_{r})\;||}{||X||_{F}}.
\end{equation*}
\end{rem}
\section{New perturbation bound for $X-A^{*}X^{-p}A=Q\;(0<p<1)$}
Here we consider the perturbed equation
\begin{equation} \label{eq4}                                                                      
\widetilde{X}-\widetilde{A}^*\widetilde{X}^{-p}\widetilde{A}=\widetilde{Q}, \;\;0<p<1,
\end{equation}
where $\widetilde{A}$ and $\widetilde{Q}$ are small perturbations
of $A$ and $Q$ in Eq.(\ref{eq1}), respectively. We assume that $X$
and $\widetilde{X}$ are the solutions of Eq.(\ref{eq1}) and
Eq.(\ref{eq4}), respectively. Let $\Delta X=\widetilde{X}-X$, $\Delta
Q=\widetilde{Q}-Q$ and $\Delta A=\widetilde{A}-A$.

In this section we develop a new perturbation bound for the
solution of Eq.(\ref{eq1}) which is sharper than that in Theorem 3.1 \cite{s27}.

Subtracting Eq.(\ref{eq1}) from Eq.(\ref{eq4}), using Lemma \ref{lem1}, we have
\begin{equation}\label{eq6}
\Delta X+\frac{\sin p\pi}{\pi}\int^{\infty}_{0}[(\lambda I+X
)^{-1}A]^{*}\Delta X[(\lambda I+X
)^{-1}A]\lambda^{-p}d\lambda=E+h(\Delta X),
\end{equation}
where
\begin{eqnarray}\label{eq7}
\!\!&&B=X^{-p}A,\nonumber\\
\!\!&&E=\Delta Q+(B^{*}\Delta A+\Delta A^{*}B)+\Delta
A^{*}X^{-p}\Delta A,\\
\!\!&&h(\Delta X)=\frac{\sin
p\pi}{\pi}A^{*}\!\!\!\int^{\infty}_{0}\!\!\!\!(\lambda I\!+\!X
)^{-1}\!\Delta X(\lambda I\!+\!X\!+\!\Delta X)^{-1}\!\Delta
X(\lambda I\!+\!X )^{-1}\lambda^{-p}\textmd{d}\lambda A\nonumber\\
\!\!&&{\hspace{1.15cm}}-\frac{\sin
p\pi}{\pi}\widetilde{A}^{*}\!\!\!\int^{\infty}_{0}\!\!\!\!(\lambda
I\!+\!X )^{-1}\!\Delta X(\lambda I\!+\!X\!+\!\Delta
X)^{-1}\lambda^{-p}\textmd{d}\lambda\Delta A\nonumber\\
\!\!&&{\hspace{1.15cm}}-\frac{\sin p\pi}{\pi}\Delta
A^{*}\!\!\!\int^{\infty}_{0}\!\!\!\!(\lambda I\!+\!X
)^{-1}\!\Delta X(\lambda I\!+\!X\!+\!\Delta
X)^{-1}\lambda^{-p}\textmd{d}\lambda A\nonumber.
\end{eqnarray}
By Lemma 5.1 in \cite{s27},  the linear operator $\mathbf{L}:\mathcal{H}^{n\times
n}\rightarrow\mathcal{H}^{n\times n}$ defined by
\begin{equation*}
 \mathbf{L}W=W+\frac{\sin p\pi}{\pi}\int^{\infty}_{0}[(\lambda I+X
)^{-1}A]^{*}W[(\lambda I+X )^{-1}A]\lambda^{-p}\emph{d}\lambda,
\;\;\;W\in\mathcal{H}^{n\times n}.
 \end{equation*}
  is invertible.

We also define operator $ {\textbf P}:\mathcal{C}^{n\times n}\rightarrow\mathcal{H}^{n\times n}$ by

$${\textbf P}Z=\textbf{L}^{-1}(B^{*}Z+Z^{*}B),\;\;Z\in\mathcal{C}^{n\times n},\;\;i=1,2, \cdots, m.$$
Thus,we can rewrite (\ref{eq6}) as
\begin{equation}\label{eq19}                                                          
    \Delta X=\textbf{L}^{-1}\Delta Q+{\textbf P}\Delta A+
\textbf{L}^{-1}(\Delta A^{*}X^{-p}\Delta A)+\textbf{L}^{-1}(h(\Delta X)).
\end{equation}
Define
$$
||\mathbf{L}^{-1}||=\max_{\begin{array}{c}
W\in\mathcal{H}^{n\times n}\\
||W||=1
\end{array}}||\mathbf{L}^{-1}W||,\;\;\;\;
||\mathbf{P}||=\max_{\begin{array}{c}
Z\in\mathcal{C}^{n\times n}\\
||Z||=1
\end{array}}||\mathbf{P}Z||.
$$
 Now we denote
 \begin{eqnarray*}
l&=&\|\textbf{L}^{-1}\|^{-1},\;\;\zeta=\|X^{-1}\|,\;\;\xi=\|X^{-p}\|,\;\;n=\|\textbf{P}\|,\;\;
\eta=p\xi\|A\|^{2}\\
\epsilon&=& \frac{1}{l}\|\Delta Q\|+n\|\Delta A\|+\frac{\xi}{l}\|\Delta A\|^{2},\;\;\;\;\sigma\;\;=\;\;\frac{p}{l}\zeta\xi(2\|A\|+\|\Delta A\|)\|\Delta A\|.
 \end{eqnarray*}

  \begin{theorem}\label{thm9}                                         
  If
  \begin{equation}\label{eq20}                                             
   \sigma<1\;\;\mbox{and}\;\;\epsilon<\frac{l(1-\sigma)^{2}}
   {\zeta(l+l\sigma+2\eta+2\sqrt{(l\sigma+\eta)(\eta+l)})},
  \end{equation}
  then
  \begin{equation*}
    \|\widetilde{X}-X\|\leq\frac{2l\epsilon}{l(1+\zeta\epsilon-\sigma)+
    \sqrt{l^{2}(1+\zeta\epsilon-\sigma)^{2}-4l\zeta\epsilon(l+\eta)}}\equiv\mu_{*}
  \end{equation*}

 \end{theorem}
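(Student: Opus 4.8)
The plan is to turn the operator identity (\ref{eq19}) into a scalar quadratic inequality for $t:=\|\Delta X\|=\|\widetilde X-X\|$ and to identify $\mu_*$ as its smaller root. Taking spectral norms in (\ref{eq19}) and using $\|\mathbf{L}^{-1}\|=l^{-1}$, $\|\mathbf{P}\|=n$ and $\|X^{-p}\|=\xi$, the three ``data'' terms are bounded by $\tfrac1l\|\Delta Q\|+n\|\Delta A\|+\tfrac{\xi}{l}\|\Delta A\|^2=\epsilon$, so that $t\le\epsilon+\tfrac1l\|h(\Delta X)\|$. Everything therefore reduces to a sharp bound on $\|h(\Delta X)\|$.

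To estimate $h(\Delta X)$ I would rely on two scalar integral identities extracted from Lemma \ref{lem1}. Since $\|(\lambda I+X)^{-1}\|=(\lambda+\lambda_{\min}(X))^{-1}$, the two representations give
\[
\frac{\sin p\pi}{\pi}\int_0^\infty\|(\lambda I+X)^{-1}\|\,\lambda^{-p}\,d\lambda=\xi,\qquad
\frac{\sin p\pi}{\pi}\int_0^\infty\|(\lambda I+X)^{-1}\|^2\,\lambda^{-p}\,d\lambda=p\,\xi\,\zeta .
\]
The perturbed resolvent is handled through $\lambda_{\min}(\widetilde X)\ge\lambda_{\min}(X)-\|\Delta X\|=\zeta^{-1}-t$, which yields $\|(\lambda I+\widetilde X)^{-1}\|\le(1-\zeta t)^{-1}\|(\lambda I+X)^{-1}\|$ whenever $\zeta t<1$. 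Feeding these into the three integrals defining $h(\Delta X)$ — the first quadratic in $\Delta X$, the other two linear in $\Delta X$ and in $\Delta A$ — and using $\|\widetilde A\|+\|A\|\le 2\|A\|+\|\Delta A\|$ to produce the combination inside $\sigma$, one is led to a bound of the shape
\[
\|h(\Delta X)\|\ \le\ \frac{\eta\zeta\,t^2+l\sigma\,t}{1-\zeta t},
\]
in which the $t^2$–coefficient is carried by $\eta=p\xi\|A\|^2$ and the $t$–coefficient by $\sigma$.

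Substituting this bound into $t\le\epsilon+\tfrac1l\|h(\Delta X)\|$, multiplying through by $l(1-\zeta t)>0$ and collecting terms produces the quadratic inequality
\[
\zeta(l+\eta)\,t^2-l(1+\zeta\epsilon-\sigma)\,t+l\epsilon\ \ge\ 0 .
\]
The hypothesis $\sigma<1$ makes $1+\zeta\epsilon-\sigma>0$, and the constant term $l\epsilon$ is positive, so this upward parabola has two positive roots precisely when its discriminant is nonnegative, and its smaller root is exactly $\mu_*$. It then remains to check that (\ref{eq20}) \emph{is} this discriminant condition $l^2(1+\zeta\epsilon-\sigma)^2-4l\zeta\epsilon(l+\eta)\ge0$: writing $u=\zeta\epsilon$ this becomes a quadratic in $u$, whose threshold rationalizes to the right-hand side of (\ref{eq20}) through the identity $(l+l\sigma+2\eta)^2-4(l\sigma+\eta)(l+\eta)=l^2(1-\sigma)^2$. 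A standard continuity argument — $\Delta X\to0$ as $(\Delta A,\Delta Q)\to0$, and $t$ cannot cross the gap between the two roots — then confines $t$ to $[0,\mu_*]$, giving $\|\widetilde X-X\|\le\mu_*$.

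The step I expect to be the main obstacle is the estimate for $\|h(\Delta X)\|$: one must control the perturbed resolvent $(\lambda I+\widetilde X)^{-1}$ purely in terms of $\zeta=\|X^{-1}\|$ and $t$, and reduce all three matrix-valued integrals to the two closed scalar forms above, so that the resulting coefficients match $\eta$ and $\sigma$ exactly and the denominator $1-\zeta t$ appears cleanly. Once that bound is secured, identifying (\ref{eq20}) with the discriminant condition and selecting the smaller root are routine.
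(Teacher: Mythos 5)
Your reduction to the scalar quadratic is algebraically the same as the paper's: the decomposition (\ref{eq19}), the bound $\|h(\Delta X)\|\le (l\sigma t+\eta\zeta t^{2})/(1-\zeta t)$ with $t=\|\Delta X\|$, and the identification of (\ref{eq20}) with the discriminant condition for $\zeta(l+\eta)x^{2}-l(1+\zeta\epsilon-\sigma)x+l\epsilon=0$ (your identity $(l+l\sigma+2\eta)^{2}-4(l\sigma+\eta)(l+\eta)=l^{2}(1-\sigma)^{2}$ does check out) all match what the paper does. Where you diverge is the final logical step. You treat (\ref{eq19}) as an identity satisfied by the true $\Delta X$, take norms to conclude that $t$ lies outside the open interval between the two roots, and then invoke ``a standard continuity argument'' to confine $t$ to $[0,\mu_{*}]$. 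The paper instead runs a fixed-point argument: it defines $f(\Delta X)$ as the right-hand side of (\ref{eq19}), shows that $f$ maps the ball $\Omega=\{\Delta X\in\mathcal{H}^{n\times n}:\|\Delta X\|\le\mu_{*}\}$ into itself (using that $\mu_{*}$ is the smaller root of the quadratic), extracts a fixed point by Schauder's theorem, and identifies that fixed point with $\widetilde{X}-X$ via the uniqueness statement of Lemma \ref{lem2}.

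The genuine gap in your version is the continuity step. To run it you must (i) know that the solution of the perturbed equation depends continuously on the data along a path $s\mapsto(A+s\Delta A,\,Q+s\Delta Q)$, which is itself a perturbation-type statement not available in the paper --- Lemma \ref{lem2} gives existence and uniqueness but not continuous dependence, and proving it independently is essentially the same difficulty as the theorem itself; and (ii) ensure your norm estimate is valid along the whole path: the resolvent bound $\|(\lambda I+\widetilde{X})^{-1}\|\le\|(\lambda I+X)^{-1}\|/(1-\zeta t)$ presupposes $\zeta t<1$, which you do not know a priori for the true $\Delta X$, so the dichotomy ``$t\le\mu_{*}$ or $t\ge$ the larger root'' only holds on the region where $\zeta t<1$ and the exclusion of large $t$ needs extra care. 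The fixed-point route sidesteps both issues: it works only on $\Omega$, where $\zeta\|\Delta X\|\le\zeta\mu_{*}<1$ is verified explicitly from (\ref{eq20}), and it needs no continuity because uniqueness of the solution of the perturbed equation pins down the fixed point as $\widetilde{X}-X$. So your estimates and the identification of $\mu_{*}$ are correct, but the closing argument should either be replaced by the Schauder-plus-uniqueness argument or be supplemented by an actual proof of continuous dependence.
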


   \begin{proof}
   {Let $$f(\Delta X)=\textbf{L}^{-1}\Delta Q+{\textbf P}\Delta A+
\textbf{L}^{-1}(\Delta A^{*}X^{-p}\Delta A)+\textbf{L}^{-1}(h(\Delta X)).$$ Obviously, $f:\mathcal{H}^{n\times n}\rightarrow\mathcal{H}^{n\times n}$ is continuous. The condition (\ref{eq20}) ensures that the quadratic equation $\zeta(l+\eta)x^{2}-l(1+\zeta\epsilon-\sigma)x+l\epsilon=0$ in $x$ has two positive real roots.
The smaller one is
$$\mu_{*}=\frac{2l\epsilon}{l(1+\zeta\epsilon-\sigma)+
    \sqrt{l^{2}(1+\zeta\epsilon-\sigma)^{2}-4l\zeta\epsilon(l+\eta)}}.$$ Define
    $\Omega=\{\Delta X\in \mathcal{H}^{n\times n}: \|\Delta X\|\leq \mu_{*}\}.$ Then for any $\Delta X\in \Omega,$ by (\ref{eq20}), we have
    \begin{eqnarray*}
   && ||X^{-1}\Delta X||\leq ||X^{-1}||||\Delta X||\leq\zeta\;\mu_{*}\leq
\zeta\cdot\frac{2\epsilon}{1+\epsilon-\sigma}\\
&&=
1+\frac{\zeta\epsilon+\sigma-1}{1+\zeta\epsilon-\sigma}\leq 1+\frac{-2(1-\sigma)(l\sigma+\eta)}{(l\sigma+l+2\eta)(1+\zeta\epsilon-\sigma)}<1.
\end{eqnarray*}
It follows that $I-X^{-1}\Delta X$ is nonsingular and
$$\|I-X^{-1}\Delta X\|\leq\frac{1}{1-\|X^{-1}\Delta X\|}\leq\frac{1}{1-\zeta\|\Delta X\|}.$$
Therefore
\begin{eqnarray*}
\|f(\Delta X)\|&\leq& \frac{1}{l}\|\Delta Q\|+n\|\Delta A\|+\frac{\xi}{l}\|\Delta A_{i}\|^{2}+\frac{p}{l}\zeta\xi\|A\|^{2}\frac{\|\Delta X\|^{2}}{1-\zeta\|\Delta X\|}\\
&+&\frac{p}{l}\zeta\xi(2\|A\|+\|\Delta A\|)\|\Delta A\|\cdot\frac{\|\Delta X\|}{1-\zeta\|\Delta X\|}\\
&\leq &\epsilon+\frac{\sigma\|\Delta X\|}{1-\zeta\|\Delta X\|}+\frac{\eta\zeta\|\Delta X\|^{2}}{l(1-\zeta\|\Delta X\|)}\\
&\leq &\epsilon+\frac{\sigma\mu_{*}}{1-\zeta\mu_{*}}+\frac{\theta\zeta\mu_{*}^{2}}{l(1-\zeta\mu_{*})}=\mu_{*},
\end{eqnarray*}
for $\Delta X\in \Omega.$ That is $f(\Omega)\subseteq\Omega.$ According to Schauder fixed point theorem, there exists $\Delta X_{*}\in\Omega$ such that $f(\Delta X_{*})=\Delta X_{*}.$ It follows that $X+\Delta X_{*}$ is a Hermitian solution of Eq.(\ref{eq4}). By Lemma \ref{lem2}, we know that the solution of Eq.(\ref{eq4}) is unique. Then $\Delta X_{*}=\widetilde{X}-X$ and $\|\widetilde{X}-X\|\leq\mu_{*}.$
}\end{proof}

\section{New backward error for $X-A^{*}X^{-p}A=Q\;(0<p<1)$}
In this section we evaluate a new backward error of an approximate
solution to the unique solution, which is sharper than that in Theorem 4.1 \cite{s27}.
\begin{theorem}\label{thm5}                                                             
Let $\widetilde{X}>0$ be an approximation to the solution $X$ of
(\ref{eq1}). If $\|\widetilde{X}^{-\frac{p}{2}}A\|^{2}\|\widetilde{X}^{-1}\|<1$ and the residual $R(\widetilde{X}) \equiv
Q+A^*\widetilde{X}^{-p}A-\widetilde{X}$ satisfies
\begin{equation}\label{eq2}
    \|R(\widetilde{X})\| \leq\frac{\theta_{1}}{2}
    \min\left\{1,\frac{\theta_{1}}{2\lambda_{\min}(\widetilde{X})}\right\},\;\mbox{where}\;\;\;
    \theta_{1}=(1-\|\widetilde{X}^{-\frac{p}{2}}A\|^{2}\|\widetilde{X}^{-1}\|)
    \lambda_{\min}(\widetilde{X})+\|R(\widetilde{X})\|>0,
\end{equation}
then
\begin{equation}\label{eq3}
    \| \widetilde{X}-X\|
    \leq\theta\|R(\widetilde{X})\|,\;\;\mbox{where}\;\;\;\theta=\frac{2\lambda_{\min}(\widetilde{X})}
    {\theta_{1}+\sqrt{\theta_{1}^{2}-4\lambda_{\min}(\widetilde{X})\|R(\widetilde{X})\|}}.
\end{equation}
\end{theorem}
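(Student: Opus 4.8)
The plan is to realize the exact solution $X$ as a fixed point of the iteration map $F(Y)=Q+A^{*}Y^{-p}A$ inside a small ball around the approximate solution $\widetilde X$, and then read off the error bound from the radius of that ball. Write $a=\lambda_{\min}(\widetilde X)$, $\rho=\|R(\widetilde X)\|$ and $w=\|\widetilde X^{-p/2}A\|^{2}=\|A^{*}\widetilde X^{-p}A\|$. Subtracting the defining relation of the residual gives the starting identity $F(Y)-\widetilde X=R(\widetilde X)+A^{*}(Y^{-p}-\widetilde X^{-p})A$. On the closed bounded convex set $\Omega=\{Y\in\mathcal H^{n\times n}:\|Y-\widetilde X\|\le\mu_{*}\}$ (with $\mu_{*}$ chosen below and $\mu_{*}<a$), every $Y$ is positive definite because $\lambda_{\min}(Y)\ge a-\mu_{*}>0$, so $F$ is continuous on $\Omega$. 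If I can show $F(\Omega)\subseteq\Omega$, Schauder's fixed point theorem produces a fixed point, which by the uniqueness in Lemma \ref{lem2} must be $X$, and hence $\|\widetilde X-X\|\le\mu_{*}$.

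The crux is the sharp estimate
$$\|A^{*}(Y^{-p}-\widetilde X^{-p})A\|\le\frac{w\,\|Y-\widetilde X\|}{a-\|Y-\widetilde X\|},\qquad \|Y-\widetilde X\|<a,$$
where the improvement over Theorem 4.1 of \cite{s27} lives: one must keep $\|A^{*}\widetilde X^{-p}A\|$ rather than crudely passing to $\|A\|^{2}\|\widetilde X^{-p}\|$. I would invoke Lemma \ref{lem1}$(i)$, so that with $\Delta=Y-\widetilde X$, $t=\|\Delta\|$ and $M_{\lambda}=\lambda I+\widetilde X$,
$$A^{*}(\widetilde X^{-p}-Y^{-p})A=\frac{\sin p\pi}{\pi}\int_{0}^{\infty}A^{*}\big[(\lambda I+\widetilde X)^{-1}-(\lambda I+Y)^{-1}\big]A\,\lambda^{-p}\,d\lambda.$$
Factoring $(\lambda I+Y)^{-1}=M_{\lambda}^{-1/2}(I+D_{\lambda})^{-1}M_{\lambda}^{-1/2}$ with $D_{\lambda}=M_{\lambda}^{-1/2}\Delta M_{\lambda}^{-1/2}$ gives $\|D_{\lambda}\|\le t/(\lambda+a)$, so the Hermitian matrix $I-(I+D_{\lambda})^{-1}$ has norm at most $t/(\lambda+a-t)$. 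A congruence by $C=M_{\lambda}^{-1/2}A$ then sandwiches the integrand between $\pm\frac{t}{\lambda+a-t}A^{*}(\lambda I+\widetilde X)^{-1}A$. Since $\frac{t}{\lambda+a-t}\le\frac{t}{a-t}$ for every $\lambda\ge0$ and $A^{*}(\lambda I+\widetilde X)^{-1}A\succeq0$, pulling out the constant and re-integrating via Lemma \ref{lem1}$(i)$ recovers $A^{*}\widetilde X^{-p}A$ exactly, yielding $-\frac{t}{a-t}A^{*}\widetilde X^{-p}A\preceq A^{*}(\widetilde X^{-p}-Y^{-p})A\preceq\frac{t}{a-t}A^{*}\widetilde X^{-p}A$, from which the norm bound follows.

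With this estimate, the self-map property reduces to $\|F(Y)-\widetilde X\|\le\rho+\frac{w\mu_{*}}{a-\mu_{*}}$ for $Y\in\Omega$, using that $s\mapsto ws/(a-s)$ is increasing on $[0,a)$. Choosing $\mu_{*}$ to be the smaller root of $x^{2}-\theta_{1}x+\lambda_{\min}(\widetilde X)\|R(\widetilde X)\|=0$ with $\theta_{1}=a+\rho-w$, a short rearrangement shows this root equation is exactly $\rho+\frac{w\mu_{*}}{a-\mu_{*}}=\mu_{*}$, so indeed $F(\Omega)\subseteq\Omega$; moreover rationalizing the smaller root gives $\mu_{*}=\theta\rho$ with $\theta$ as in \eqref{eq3}, which is the claimed conclusion.

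It remains to verify that the residual hypothesis \eqref{eq2} legitimizes this $\mu_{*}$, and this is the only place the $\min$ is used. The condition $w<a$ (i.e. $\|\widetilde X^{-p/2}A\|^{2}\|\widetilde X^{-1}\|<1$) gives $\theta_{1}>0$ and $f(a)=a^{2}-\theta_{1}a+a\rho=aw>0$ for $f(x)=x^{2}-\theta_{1}x+a\rho$. The $\min$ first excludes the regime $\theta_{1}>2a$: there the active branch would be $\rho\le\theta_{1}/2$, which with $\theta_{1}=a-w+\rho$ forces $\rho\le a-w$ and hence $\theta_{1}\le 2(a-w)<2a$, a contradiction. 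Thus $\theta_{1}\le2a$, so the active branch is $\rho\le\theta_{1}^{2}/(4a)$, giving the real-roots condition $\theta_{1}^{2}\ge4a\rho$; and the vertex $\theta_{1}/2\le a$ together with $f(a)>0$ forces the smaller root $\mu_{*}<a$, which is precisely the positivity margin the fixed point argument needs. I expect the congruence-and-re-integration step in the sharp estimate to be the main obstacle, since it must be carried out at the level of operator (Loewner) inequalities, not norms, in order to retain $\|A^{*}\widetilde X^{-p}A\|$ and achieve the advertised sharpening.
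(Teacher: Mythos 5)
Your proposal is correct and its overall architecture is the same as the paper's: realize $X$ as a fixed point of $Y\mapsto Q+A^{*}Y^{-p}A$ on the ball of radius $\mu_{*}=\theta\|R(\widetilde X)\|$ about $\widetilde X$, verify the self-map property, invoke a fixed-point theorem, and conclude by the uniqueness in Lemma \ref{lem2}; your quadratic $x^{2}-\theta_{1}x+\lambda_{\min}(\widetilde X)\|R(\widetilde X)\|=0$ and your discussion of the $\min$ in (\ref{eq2}) match the paper's computation that $\theta\|R(\widetilde X)\|<\lambda_{\min}(\widetilde X)$. The one place you genuinely diverge is the key Lipschitz-type estimate for $A^{*}\bigl(Y^{-p}-\widetilde X^{-p}\bigr)A$. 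The paper proves a separate lemma using the first-order resolvent identity expanded to second order, bounding the two resulting terms via the representation $X^{-p}=\frac{\sin p\pi}{p\pi}\int_{0}^{\infty}(\lambda I+X)^{-1}X(\lambda I+X)^{-1}\lambda^{-p}\,d\lambda$ of Lemma \ref{lem1}(ii); this yields $p\bigl(\|\Delta X\|+\nu\|\Delta X\|^{2}\bigr)\|\widetilde X^{-p/2}A\|^{2}\|\widetilde X^{-1}\|$, which after substituting $\nu=(a-t)^{-1}$ collapses to $p\,wt/(a-t)$ in your notation. Your congruence-and-Loewner-sandwich argument with $(I+D_{\lambda})^{-1}$ gives $wt/(a-t)$ in one step, which is marginally weaker by the factor $p\in(0,1)$ but is exactly what the stated $\theta_{1}=(1-w/a)a+\rho$ requires, so the theorem follows either way (indeed the paper's final displayed equality only holds as an inequality for $p<1$, precisely because its lemma retains the extra $p$). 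Both derivations achieve the advertised sharpening by keeping $\|A^{*}\widetilde X^{-p}A\|$ intact rather than splitting it as $\|A\|^{2}\|\widetilde X^{-p}\|$; your route avoids the second-order expansion at the cost of the factor $p$, the paper's keeps it. Using Schauder rather than Brouwer is immaterial in this finite-dimensional setting.
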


To prove the above theorem, we first verify the following lemma.
\begin{lemma}  \label{lem5}                                                             
For every positive definite matrix $X\in\mathcal{H}^{n\times n}$,
$0<p<1$, if $X+\Delta X\geq (1/\nu) I>0,$ then
\begin{equation}  \label{eq5}                                                                
 \| A^*((X+\Delta
X)^{-p}-X^{-p})A\| \leq p\,(\|\Delta X \| +
\nu\|\Delta X\|^{2})\|X^{-\frac{p}{2}}A\|^{2}\|X^{-1}\|.
\end{equation}
\end{lemma}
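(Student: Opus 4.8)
The plan is to combine the integral representation of the matrix power in Lemma~\ref{lem1} with the resolvent identity, exactly mirroring the second-order expansion already recorded in the definition of $h(\Delta X)$ in \eqref{eq7}. Writing $R_\lambda=(\lambda I+X)^{-1}$ and $\widetilde R_\lambda=(\lambda I+X+\Delta X)^{-1}$, Lemma~\ref{lem1}$(i)$ gives
$(X+\Delta X)^{-p}-X^{-p}=\frac{\sin p\pi}{\pi}\int_0^\infty(\widetilde R_\lambda-R_\lambda)\lambda^{-p}\,d\lambda$.
Applying the resolvent identity twice, $\widetilde R_\lambda-R_\lambda=-R_\lambda\Delta X R_\lambda+R_\lambda\Delta X\widetilde R_\lambda\Delta X R_\lambda$, so conjugating by $A$ yields $A^*\big((X+\Delta X)^{-p}-X^{-p}\big)A=-T_1+T_2$, where $T_1=\frac{\sin p\pi}{\pi}\int_0^\infty A^*R_\lambda\Delta X R_\lambda A\,\lambda^{-p}\,d\lambda$ and $T_2=\frac{\sin p\pi}{\pi}\int_0^\infty A^*R_\lambda\Delta X\widetilde R_\lambda\Delta X R_\lambda A\,\lambda^{-p}\,d\lambda$. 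Both are Hermitian, and since $0<p<1$ the scalar weight $\frac{\sin p\pi}{\pi}\lambda^{-p}$ is strictly positive, which is what lets me pass Loewner-order inequalities through the integral.

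For the first-order term I would use the sandwich $-\|\Delta X\|I\le\Delta X\le\|\Delta X\|I$ together with the elementary bound $I\le\|X^{-1}\|X$ (valid because $X\ge\lambda_{\min}(X)I=\|X^{-1}\|^{-1}I$). Conjugating by the fixed matrix $R_\lambda A$ preserves the ordering, so pointwise in $\lambda$ one gets $-\|\Delta X\|\|X^{-1}\|\,A^*R_\lambda X R_\lambda A\le A^*R_\lambda\Delta X R_\lambda A\le \|\Delta X\|\|X^{-1}\|\,A^*R_\lambda X R_\lambda A$. Integrating against the positive weight and invoking Lemma~\ref{lem1}$(ii)$, which says precisely that $\frac{\sin p\pi}{\pi}\int_0^\infty R_\lambda X R_\lambda\lambda^{-p}\,d\lambda=p\,X^{-p}$, collapses the right-hand side to $p\,\|\Delta X\|\|X^{-1}\|\,A^*X^{-p}A$. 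Hence $\|T_1\|\le p\,\|\Delta X\|\,\|X^{-1}\|\,\|A^*X^{-p}A\|=p\,\|\Delta X\|\,\|X^{-p/2}A\|^2\,\|X^{-1}\|$, using $\|A^*X^{-p}A\|=\|X^{-p/2}A\|^2$.

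The second-order term is where the hypothesis $X+\Delta X\ge(1/\nu)I$ enters: for every $\lambda\ge0$ it forces $\lambda I+X+\Delta X\ge(1/\nu)I$, hence $\widetilde R_\lambda\le\nu I$, so $\Delta X\widetilde R_\lambda\Delta X\le\nu\,\Delta X^2\le\nu\|\Delta X\|^2 I\le\nu\|\Delta X\|^2\|X^{-1}\|X$. Conjugating by $R_\lambda A$ and integrating exactly as before gives $0\le T_2\le p\,\nu\|\Delta X\|^2\|X^{-1}\|\,A^*X^{-p}A$, so $\|T_2\|\le p\,\nu\|\Delta X\|^2\|X^{-p/2}A\|^2\|X^{-1}\|$. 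Finally the triangle inequality $\|{-T_1}+T_2\|\le\|T_1\|+\|T_2\|$ delivers the claimed bound, and since the target estimate is itself the sum of the two pieces no sharpness is lost in that last step. I expect the main obstacle to be the careful bookkeeping of the Loewner-order manipulations inside the integral: one must justify that the resolvents and $\Delta X$ are Hermitian, that conjugation by $R_\lambda A$ preserves $\ge$, and that the positivity of $\sin p\pi$ permits integrating the matrix inequalities termwise so as to recognize Lemma~\ref{lem1}$(ii)$ on the nose.
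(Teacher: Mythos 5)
Your proposal is correct and follows essentially the same route as the paper: the integral representation from Lemma~\ref{lem1}$(i)$, the second-order resolvent expansion $\widetilde R_\lambda-R_\lambda=-R_\lambda\Delta X R_\lambda+R_\lambda\Delta X\widetilde R_\lambda\Delta X R_\lambda$, the triangle inequality, and the bounds $p\|A^*X^{-p}A\|\|X^{-1}\|\|\Delta X\|$ and $p\nu\|A^*X^{-p}A\|\|X^{-1}\|\|\Delta X\|^2$ for the two pieces. The only difference is that you spell out, via the Loewner-order sandwich and Lemma~\ref{lem1}$(ii)$, the justification for those two term-by-term bounds, which the paper states without detail.
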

\begin{proof}
If $X+\Delta X\geq (1/\nu) I>0$, then
\begin{eqnarray*}
& &\| A^*((X+\Delta X)^{-p}-X^{-p})A\|\\
&=&\left\| A^*\left(\frac{\sin
p\,\pi}{\pi}\int^\infty_0\left((\lambda\,I+X+\Delta X)^{-1}\nonumber  -
(\lambda\,I+X)^{-1}\right)\lambda^{-p}\,\textmd{d}\lambda\right)
A\right\|\\
&\leq &\frac{\sin
p\,\pi}{\pi}\left(\|
A^*\int^\infty_0(\lambda\,I+X)^{-1}\Delta
X(\lambda\,I+X)^{-1}\lambda^{-p}\textmd{d}\lambda\;A \|\right)\\
&&+\frac{\sin
p\,\pi}{\pi}\left(\|A^*\int^\infty_0\,(\lambda\,I+X )^{-1}\Delta
X(\lambda\,I+X+\Delta X
)^{-1}\Delta X(\lambda\,I+X)^{-1}\lambda^{-p}\textmd{d}\lambda\;A \|\right)\\
& \leq & p\,\|A^*X^{-p}A\|\|X^{-1}\|\| \Delta X\|+p\,\|A^*X^{-p}A\|\nu{\|\Delta X\|}^{2}\|X^{-1}\|\\
& = &p\,(\|\Delta X \| +
\nu\|\Delta X\|^{2})\|X^{-\frac{p}{2}}A\|^{2}\|X^{-1}\|.
\end{eqnarray*}
\end{proof}

\begin{proof}
Let $$\Psi = \{\Delta X \in
\mathcal{H}^{n\times n}:\|\Delta X\|\leq \theta
\|R(\widetilde{X})\|\}.$$ Obviously, $\Psi$ is a nonempty
bounded convex closed set. Let $$g(\Delta
X)=A^*((\widetilde{X}+\Delta
X)^{-p}-\widetilde{X}^{-p})A+R(\widetilde{X}).$$ Evidently $g:
\Psi\mapsto\mathcal{H}^{n\times n}$ is continuous. We will prove
that $g(\Psi)\subseteq\Psi$. For every $\Delta X\in\Psi,$ we have $$\Delta X\geq -\theta\|R(\widetilde{X})\|I.$$ Hence
 $$\widetilde{X}+\Delta X\geq \widetilde{X}-\theta\|R(\widetilde{X})\|I\geq (\lambda_{\min}(\widetilde{X})-\theta\|R(\widetilde{X})\|)I.$$
  Using (\ref{eq2}) and (\ref{eq3}), one sees
that
$$\theta\|R(\widetilde{X})\|=\frac{2\lambda_{\min}(\widetilde{X})\|R(\widetilde{X})\|}
    {\theta_{1}+\sqrt{\theta_{1}^{2}-4\lambda_{\min}(\widetilde{X})\|R(\widetilde{X})\|}}<
\frac{2\lambda_{\min}(\widetilde{X})\|R(\widetilde{X})\|}{\theta_{1}}<\lambda_{\min}(\widetilde{X}).$$
Therefore, $(\lambda_{\min}(\widetilde{X})-\theta\|R(\widetilde{X})\|)I>0.$

According to (\ref{eq5}), we obtain
\begin{eqnarray*}
&&\|g(\Delta X)\|\\
&\leq&p\,(\|\Delta X \| +
\frac{\|\Delta X\|^{2}}{\lambda_{\min}(\widetilde{X})-\theta\|R(\widetilde{X})\|})\|X^{-\frac{p}{2}}A\|^{2}\|\widetilde{X}^{-1}\|+\|R(\widetilde{X})\|\\
&\leq &\left(\theta\|R(\widetilde{X})\|+\frac{(\theta\|R(\widetilde{X})\|)^{2}}
{\lambda_{\min}(\widetilde{X})-\theta\|R(\widetilde{X})\|}\right)(p\|X^{-\frac{p}{2}}A\|^{2}\|\widetilde{X}^{-1}\|)
+\|R(\widetilde{X})\|\\
&=&\theta\|R(\widetilde{X})\|.
\end{eqnarray*}
By Brouwer's fixed point theorem, there exists a $\Delta X\in\Psi$
such that $g(\Delta X)=\Delta X.$ Hence $\widetilde{X}+\Delta X$
is a solution of Eq.(\ref{eq1}). Moreover, by Lemma \ref{lem2}, we
know that the solution $X$ of Eq.(\ref{eq1}) is unique. Then
\begin{equation*}
    \|\widetilde{X}-X\|=\|\Delta X\|\leq\theta\|R(\widetilde{X})\|.
\end{equation*}
\end{proof}

\section{Numerical Examples }
To illustrate the theoretical results of the previous sections, in this
section four simple examples are given, which were carried out
using MATLAB 7.1. For the stopping criterion we take
$\varepsilon_{k+1}(X)=\|X_{k}-A^{*}X_{k}^{-p}A-Q\|<1.0e-10.$

\begin{example}      \label{exam1}                                                     
We consider the matrix equation $$X-A^{*}X^{-\frac{1}{3}}A=I,$$
where \[A=\frac{A_{0}}{||A_{0}||},\;\;\;A_{0}=\left(\begin{array}{cc}
2 & 0.95\\
0 & 1
\end{array}\right).
\]
 Suppose that the
coefficient matrix $A$ is perturbed to
$\widetilde{A}=A+\Delta A$, where $$\Delta A=\frac{10^{-j}}{\|C^{T}+C\|}(C^{T}+C)$$ and $C$ is a random matrix generated by
MATLAB function \textbf{randn}.

We compare our own result $\frac{\mu_{*}}{\|X\|}$ in Theorem \ref{thm9}with the perturbation bound $\xi_{*}$
proposed in Theorem 3.1 \cite{s27}.

The condition in Theorem 3.1 \cite{s27} is
\begin{equation*}
con1=\sqrt{\|A\|^2+\zeta}-\|A\|-\|\Delta A\|>0.
\end{equation*}

The conditions in Theorem \ref{thm9} are
\begin{equation*}
con2= 1-\sigma>0,\;\;
  con3 = \frac{l(1-\sigma)^{2}}{\zeta(l+\sigma l+2\eta+2\sqrt{(l\sigma+\eta)(\eta+l)})}-\epsilon >0.
\end{equation*}
By computation, we list them in Table \ref{tab:1}.
\begin{table}[h t b]
\caption{Conditions for Example \ref{exam1} with different values of j}
\label{tab:1}
\begin{tabular}{p{1.5cm}p{2.4cm}p{2.4cm}p{2.4cm}p{2.4cm}}
\hline\noalign{\smallskip}
$j$                & 4             & 5              & 6              & 7\\
\noalign{\smallskip}\hline\noalign{\smallskip}
$con1$ & $0.0455$ & $0.0456$ & $0.0456$ & $0.0456$  \\
$con2$ & $  0.9999$ & $1.0000$ & $1.0000$ & $1.0000$\\
$con3$ & $0.3957$ & $0.3959$ & $ 0.3959$ & $0.3959$\\\hline
\noalign{\smallskip}
\end{tabular}
\end{table}

The results listed in Table \ref{tab:1} show that the conditions in Theorem 3.1 \cite{s27}
and Theorem \ref{thm9} are satisfied.
\vskip 0.1in

By Theorem 3.1 in \cite{s27} and Theorem \ref{thm9}, we can
compute the relative perturbation bounds $\xi_{*}, \frac{\mu_{*}}{\|X\|}$, respectively. These results averaged as the geometric mean of  10 randomly perturbed runs.
Some results are listed in Table \ref{tab:2}.
\begin{table}[h t b]
\caption{Results for Example \ref{exam1} with different values of j}
\label{tab:2}
\begin{tabular}{p{1.5cm}p{2.4cm}p{2.4cm}p{2.4cm}p{2.4cm}}
\hline\noalign{\smallskip}
$j$                & 4             & 5              & 6              & 7\\
\noalign{\smallskip}\hline\noalign{\smallskip}
$\frac{\|\widetilde{X}-X\|}{\|X\|}$ & $6.8119\times 10^{-5}$ & $4.2332\times 10^{-6}$ & $4.3287\times10^{-7}$ & $ 5.5767\times10^{-8}$  \\
 $\xi_{*}$ &  $
2.6003\times 10^{-4}$ & $ 2.1375\times 10^{-5}$ & $ 1.9229\times
10^{-6}$ & $2.7300\times10^{-7}$\\
 $\frac{\mu_{*}}{\|X\|}$ &  $8.8966\times 10^{-5}$ & $ 6.5825\times 10^{-6}$ & $7.2867\times 10^{-7}$ & $9.3455\times 10^{-8}$\\\hline
\noalign{\smallskip}
\end{tabular}
\end{table}

The results listed in Table \ref{tab:2} show that the perturbation
bound $\frac{\mu_{*}}{\|X\|}$ given by Theorem \ref{thm9} is fairly sharp, while the bound $\xi_{*}$ given by Theorem 3.1 in\cite{s27} is conservative.
\end{example}

\begin{example}    \label{exam2}                                                       
Consider the equation $$X-A^{*}X^{-3/4}A=Q,$$  for
\[A=\left(\begin{array}{rr}
0.2 & -0.2\\
0.1 & 0.1
\end{array}\right),\;\;\;\;Q=\left(\begin{array}{cc}
0.8939 &0.2987\\
0.1991 &0.6614
\end{array}\right).\] Choose $\widetilde{X}_0=3Q$. Let the
approximate solution $\widetilde{X}_k$ be given with the
iterative method (\ref{eq10}), where $k$ is the iterative number.
Assume that the solution $X$ of Eq.(\ref{eq1}) is
unknown.

We compare our own result with the backward error
proposed in Theorem 4.1 \cite{s27}.

The residual $R(\widetilde{X}_k)\equiv
Q+A^*\widetilde{X}_k^{-p}A-\widetilde{X}_k$ satisfies the conditions in
Theorem 4.1 \cite{s27} and in Theorem \ref{thm5}.

 By Theorem 4.1 in\cite{s27}
, we can compute the backward error
bound
$$\parallel\widetilde{X}_k -X\parallel \leq
\nu_{*}\|R(\widetilde{X}_k)\|, \;\;\mbox{where}\;\;\;
\nu_{*}=\displaystyle\frac{2\|\widetilde{X}_{k}\|\|\widetilde{X}_{k}^{-1}\|}{1-\frac{3}{4}\,\|\widetilde{X}_{k}^{-\frac{3}{8}}A
\widetilde{X}_{k}^{-1/2}\|^{2}}.$$

By Theorem \ref{thm5}, we can compute the new backward error
bound
$$\| \widetilde{X}_{k}-X\|
    \leq\theta\|R(\widetilde{X}_{k})\|,\;\;\mbox{where}\;\;\;\theta=\frac{2\lambda_{\min}(\widetilde{X}_{k})}
    {\theta_{1}+\sqrt{\theta_{1}^{2}-4\lambda_{\min}(\widetilde{X}_{k})\|R(\widetilde{X}_{k})\|}},
    $$$\theta_{1}=(1-\|\widetilde{X}_{k}^{-\frac{3}{8}}A\|^{2}\|\widetilde{X}_{k}^{-1}\|)
    \lambda_{\min}(\widetilde{X}_{k})+\|R(\widetilde{X}_{k})\|.$

    Let $$\kappa_{1}=\frac{\nu_{*}\|R(\widetilde{X}_k)\|}{\parallel\widetilde{X}_k -X\parallel },\;\;
    \;\;\kappa_{2}=\frac{\theta\|R(\widetilde{X}_{k})\|}{\parallel\widetilde{X}_k -X\parallel }.$$

Some results are shown in Table\ref{tab:3}.
\vskip 0.1in
\begin{table}[h t b]
\caption{Results for Example \ref{exam2} with different values of k}
\label{tab:3}
\begin{tabular}{p{1.5cm}p{2.4cm}p{2.4cm}p{2.4cm}p{2.4cm}}
\hline\noalign{\smallskip}
$k$                & 4            & 5              & 6              &7\\
\noalign{\smallskip}\hline\noalign{\smallskip}
$||\widetilde{X}_k-X||$ & $6.2131\times 10^{-6}$ & $1.5830\times
10^{-7}$ & $8.2486\times 10^{-9}$ & $6.0132\times 10^{-10}$  \\
 $\nu_{*}|| R(\widetilde{X}_k)||$ & $2.5930\times 10^{-5}$ & $6.6257\times 10^{-7}$ & $3.5697\times 10^{-8}$ & $2.4646\times 10^{-9}$ \\
 $\kappa_{1}$ & $ 4.1734$ & $4.1856$ & $4.3277$ & $ 4.0986$ \\
 $\theta|| R(\widetilde{X}_k)||$ & $ 7.0053\times 10^{-6}$ & $1.7900\times 10^{-7}$ & $9.6440\times 10^{-9}$ & $6.6583\times 10^{-10}$\\
 $\kappa_{2}$ & $ 1.1275$ & $ 1.1308$ & $1.1692$ & $ 1.1073$\\ \hline
\noalign{\smallskip}
\end{tabular}
\end{table}
\vskip 0.1in
From the results listed in Table \ref{tab:3} we see that the new
backward error bound $\theta|| R(\widetilde{X}_k)||$ is sharper than the backward error bound
$\nu_{*}|| R(\widetilde{X}_k)||$ in \cite{s27}.  Moreover, we see that the
backward error  $\theta ||R(\widetilde{X})||$ for an approximate
solution $\widetilde{X}$ seems to be independent of the
conditioning of the solution $X$.
\end{example}

\begin{example}      \label{exam3}                                                     
We consider the matrix equation $$X-A^{*}X^{-3}A=5I,$$
where \[A=\frac{A_{0}}{||A_{0}||},\;\;\;A_{0}=\left(\begin{array}{cc}
2 & 0.95\\
0 & 1
\end{array}\right).
\]
We now consider the perturbation bounds for the solution $X$ when the
coefficient matrix $A$ is perturbed to
$\widetilde{A}=A+\Delta A$, where $$\Delta A=\frac{10^{-j}}{\|C^{T}+C\|}(C^{T}+C)$$ and $C$ is a random matrix generated by
MATLAB function \textbf{randn}.

The conditions in Theorem \ref{thm1} are satisfied.

By Theorem \ref{thm1}, we can
compute the relative perturbation bound $\varrho$ with different values of $j$. These results averaged as the geometric mean of  10 randomly perturbed runs.
Some results are listed in Table \ref{tab:4}.
\begin{table}[h t b]
\caption{Results for Example \ref{exam3} with different values of j}
\label{tab:4}
\begin{tabular}{p{1.5cm}p{2.4cm}p{2.4cm}p{2.4cm}p{2.4cm}}
\hline\noalign{\smallskip}
$j$                & 4             & 5              & 6              & 7\\
\noalign{\smallskip}\hline\noalign{\smallskip}
$\frac{\|\widetilde{X}-X\|}{\|X\|}$ & $1.1892\times 10^{-7}$ & $2.1101\times 10^{-8}$ & $2.4085\times10^{-9}$ & $ 1.6847\times10^{-10}$  \\
 $\varrho$ &  $
2.0791\times 10^{-7}$ & $ 3.5353\times 10^{-8}$ & $ 3.9573\times
10^{-9}$ & $3.2580\times10^{-10}$\\\hline
\noalign{\smallskip}
\end{tabular}
\end{table}
\vskip 0.1in
The results listed in Table \ref{tab:4} show that the perturbation
bound $\varrho$ given by Theorem \ref{thm1} is fairly sharp.
\end{example}
\begin{example}   \label{ex5}                                                           
Consider the matrix equation $X-A^{*}X^{-3}A=Q,$ where
\[A=\left(\begin{array}{cc}
0.5& 0.55-10^{-k}\\
1  & 1
\end{array}\right),\;\;\;\;\;\;Q=\left(\begin{array}{cc}
5& 1\\
1  & 5
\end{array}\right).
\]  By Remark \ref{rem2}, we can compute the relative
condition number $c_{rel}(X).$ Some results are listed in Table
\ref{tab:6}.
\begin{table}[h t b]
\caption{Results for Example \ref{ex5} with different values of $k$}
\label{tab:6}
\begin{tabular}{p{2cm}p{1.7cm}p{1.7cm}p{1.7cm}p{1.7cm}p{1.7cm}}
\hline\noalign{\smallskip}
  $k$ & 1 & 3 & 5 & 7 & 9 \\ \noalign{\smallskip}\hline\noalign{\smallskip}
 $c_{rel}(X)$& 1.2510 &  1.0991 & 1.0009 & 1.0009 & 1.0009  \\ \hline\noalign{\smallskip}
\end{tabular}
\end{table}

 The numerical results listed in the second line show that the unique
positive definite solution $X$ is well-conditioned.
\end{example}

\section*{Acknowledgements}
The authors would like to express their gratitude to the referees for their fruitful comments
and suggestions regarding the earlier version of this paper.

\end{document}